\renewcommand{\Re}{\operatorname{Re}}
\newcommand{\tilP}{P}
\newcommand{\tilPhi}{\Phi}
\newcommand{\Fourier}[1]{ \widehat{#1}}
\newcommand{\scalp}[2]{#1\cdot #2}
\newcommand{\R}{\mathbb R}
\newcommand{\Rd}{{\mathbb R^d}}
\newcommand{\I}{\mathbh{1}}
\newcommand{\N}{\mathbb{N}}
\newcommand{\var}{\operatorname{Var}}
\newcommand{\Bb}{\mathscr{B}}
\newcommand{\sfera}{ \mathbb{S}}
\newcommand{\eqref}[1]{(\ref{#1})}
\newtheorem{theorem}{Theorem}[section]
\newtheorem{proposition}[theorem]{Proposition}
\begin{document}
\begin{frontmatter}

\title{Coupling property and gradient estimates of L\'evy
processes via the symbol}

\runtitle{Coupling property and gradient estimates of L\'evy processes}

\begin{aug}
\author[1]{\fnms{Ren\'{e} L.} \snm{Schilling}\thanksref{1}\ead[label=e1]{rene.schilling@tu-dresden.de}},
\author[2]{\fnms{Pawe{\l}} \snm{Sztonyk}\thanksref{2}\ead[label=e2]{sztonyk@pwr.wroc.pl}}
\and
\author[3]{\fnms{Jian} \snm{Wang}\corref{}\thanksref{1,3}\ead[label=e3]{jianwang@fjnu.edu.cn}}

\runauthor{R.L. Schilling, P. Sztonyk and J. Wang}

\address[1]{TU Dresden, Institut f\"{u}r Mathematische Stochastik,
01062 Dresden, Germany.\\ \printead{e1}}

\address[2]{Institute of Mathematics and Computer Science,
Wroc{\l}aw University of Technology, Wybrze{\.z}e Wyspia{\'n}\-skie\-go
27, 50-370 Wroc{\l}aw, Poland. \printead{e2}}

\address[3]{School of Mathematics and Computer Science, Fujian Normal
University, 350007 Fuzhou, P.R.~China. \printead{e3}}
\end{aug}

\received{\smonth{3} \syear{2011}} \revised{\smonth{4} \syear{2011}}

%
\begin{abstract}
We derive explicitly the coupling property for the transition semigroup
of a L\'evy process and gradient estimates for the associated semigroup
of transition operators. This is based on the asymptotic behaviour of
the symbol or the characteristic exponent near zero and infinity,
respectively. Our results can be applied to a large class of L\'evy
processes, including stable L\'evy processes, layered stable processes,
tempered stable processes and relativistic stable processes.
\end{abstract}

%
\begin{keyword}
\kwd{coupling} \kwd{gradient estimates} \kwd{L\'evy process}
\kwd{symbol}
\end{keyword}

\end{frontmatter}
\def\theequation{\arabic{section}.\arabic{equation}}
\section{Introduction and main results}\label{section1}
Let $X_t$ be a pure jump L\'evy process on $\R^d$ with the symbol (or
characteristic exponent)
\[
\Phi(\xi) =\int_{z\neq0}
\bigl(1-\mathrm{e}^{\mathrm{i}\scalp{\xi}{z}}+\mathrm{i}\scalp{\xi}{z}\I
_{B(0,1)}(z)\bigr)\nu(\mathrm{d}z),
\]
where $\nu$ is the L\'evy measure, that is, a $\sigma$-finite measure
on $\R^d\setminus\{0\}$ such that the integral $\int_{z\neq0}(1\wedge
|z|^2)\nu(\mathrm{d}z)<\infty$. There are many papers studying regularity
properties of L\'evy processes in terms of the symbol $\Phi$. For
example, recently~\cite{KV}, Theorem 1, points out the relations between
the classic Hartman--Wintner condition (see~\cite{HW} or \eqref{den1}
below) and some smoothness properties of the transition density for
L\'evy processes. In particular, the condition that the symbol
$\Phi(\xi)$ of the L\'evy process $X_t$ satisfies
\begin{equation}\label{den1}
\liminf_{|\xi|\to\infty}\frac{\Re\Phi(\xi)}{\log(1+|\xi
|)}=\infty
\end{equation}
is \textit{equivalent} to the statement that for all $t>0$ the random
variables $X_t$ have a transition density $p_t(y)$ such that $\nabla
p_t\in L_1(\R^d)\cap C_\infty(\R^d)$, where $C_\infty(\R^d)$ denotes
the set of all continuous functions which vanish at infinity. The main
purpose of this paper is to derive an explicit coupling property and
gradient estimates of L\'evy processes directly from the corresponding
symbol $\Phi$.

Let $(X_t)_{t\geq0}$ be a Markov process on $\R^d$ with transition
probability function $\{P_t(x,\cdot)\}_{t\geq0, x\in\R^d}$. An
$\R^{2d}$-valued process $(X'_t,X''_t)_{t\geq0}$ is called \textit{a
coupling of the Markov process} $(X_t)_{t\geq0}$, if both
$(X'_t)_{t\geq0}$ and $(X''_t)_{t\geq0}$ are Markov processes which have
the same transition functions $P_t(x,\cdot)$ but possibly different
initial distributions. In this case, $(X'_t)_{t\geq0}$ and
$(X''_t)_{t\geq0}$ are called the \textit{marginal processes} of the
coupling process; the \textit{coupling time} is defined by
$T:=\inf\{t\geq0\dvt X'_t=X''_t\}$. The coupling $(X'_t,X''_t)_{t\geq0}$ is
said to be \textit{successful} if $T$ is a.s. finite. If for any two
initial distributions $\mu_1$ and $\mu_2$, there exists a successful
coupling with marginal processes starting from $\mu_1$ and $\mu_2$,
respectively, we say that $X_t$ has \textit{the coupling property} (or
admits \textit{successful couplings}). According to~\cite{Li} and the
proof of~\cite{RWJ}, Theorem 4.1, the coupling property is equivalent
to the statement that
\[
\lim_{t\to\infty}\|P_t(x,\cdot)-P_t(y,\cdot)\|_{\var}=0\qquad
\mbox{for any } x, y\in\R^d,
\]
where $P_t(x,\cdot)$ is the transition function of the Markov process
$(X_t)_{t\geq0}$. By $\|\mu\|_{\var}$ we denote the total variation norm
of the signed measure $\mu$. We know from~\cite{RWJ}, Theorem 4.1, that
every L\'evy process has the coupling property if the transition
functions have densities for all sufficiently large $t>0$. In this
case, the transition probability function satisfies
\begin{equation}\label{1proff111}
\|P_t(x,\cdot)-P_t(y,\cdot)\|_{\var}\leq\frac{C(1+|x-y|)}{\sqrt
{t}}\wedge2
\qquad\mbox{for }t>0
\mbox{ and }x, y\in\R^d.
\end{equation}
It is clear that for any $x, y\in\R^d$ and $t\geq0$,
$\|P_t(x,\cdot)-P_t(y,\cdot)\|_{\var}\leq2$, and that the norm
$\|P_t(x,\cdot)-P_t(y,\cdot)\|_{\var}$ is decreasing with respect to
$t$. This shows that it is enough to estimate
$\|P_t(x,\cdot)-P_t(y,\cdot)\|_{\var}$ for large values of $t$. We will
call any estimate for $\|P_t(x,\cdot)-P_t(y,\cdot)\|_{\var}$ an
\textit{estimate of the coupling time}. The rate $1/\sqrt{t}$ in
\eqref{1proff111} is not optimal for general L\'evy processes which
admit successful couplings. For example, for rotationally invariant
$\alpha$-stable L\'evy processes we can prove, see~\cite{BRW}, Example 2.3, that
\[
\|P_t(x,\cdot)-P_t(y,\cdot)\|_{\var}\asymp\frac{1}{t^{1/\alpha
}}\qquad\mbox{as }t\to\infty,
\]
where for any two non-negative functions $g$ and $h$, the notation
$g\asymp h$ means that there are two positive constants $c_1$ and $c_2$
such that $c_1g\leq h\leq c_2g$.

Let $P_t(x,\cdot)$ and $P_t$ be the transition function and the
semigroup of the L\'evy process $X_t$, respectively. We begin with
coupling time estimates of L\'evy processes which satisfy the following
Hartman--Wintner condition for some $t_0>0$:
\begin{equation}\label{coup1}
\liminf_{|\xi|\to\infty}\frac{\Re\Phi(\xi)}{\log(1+|\xi|)} >
\frac d{t_0};
\end{equation}
this condition actually ensures that the transition function of the
L\'evy process $X_t$ is, for all $t>t_0$, absolutely continuous, see, for exampe,~\cite{HW} or~\cite{KV}. Note that \eqref{coup1} becomes
\eqref{den1} if $t_0\to0$.

\begin{theorem}\label{th1}
Suppose that \eqref{coup1} holds and
\[
\Re\Phi(\xi)\asymp f(|\xi|)\qquad
\mbox{as } |\xi| \to0,
\]
where $f\dvtx [0,\infty)\to\R$ is a strictly increasing function which is
differentiable near zero and which satisfies
\[
\liminf_{r\to0}f(r)|\log r|<\infty
\]
and
\[
\limsup_{s\to0 }f^{-1}(2 s)/f^{-1}(s)<\infty.
\]
Then the corresponding L\'evy process $X_t$ has the coupling property,
and there exist two constants $c,t_1>0$ such that for any $x, y\in\R
^d$ and $t\geq t_1$,
\[
\|P_t(x,\cdot)-P_t(y,\cdot)\|_{\var}\leq c f^{-1}(1/t).
\]
\end{theorem}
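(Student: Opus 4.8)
The plan is to estimate the total variation distance $\|P_t(x,\cdot)-P_t(y,\cdot)\|_{\var}$ via Fourier analysis, using the fact that under \eqref{coup1} the process has a transition density $p_t$ for $t>t_0$, and that $\|P_t(x,\cdot)-P_t(y,\cdot)\|_{\var}=\|p_t(\cdot-x)-p_t(\cdot-y)\|_{L^1}$. Because a direct $L^1$-estimate is awkward, the idea is to split the difference into a low-frequency and a high-frequency part. More precisely, for a mollification parameter $\rho>0$ write $p_t = p_t*\chi_\rho + (p_t - p_t*\chi_\rho)$ for a suitable smooth bump $\chi_\rho$, or — more in the spirit of the quoted literature — estimate $\|p_t(\cdot-x)-p_t(\cdot-y)\|_{L^1}$ by interpolating the $L^2$-norm of $p_t(\cdot-x)-p_t(\cdot-y)$ against the $L^2$-norm of its gradient (a Cauchy–Schwarz/Plancherel argument giving $\|g\|_{L^1(B)}\le |B|^{1/2}\|g\|_{L^2}$ on balls, plus the observation that $p_t$ has light tails so the contribution from outside a large ball is negligible). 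Both quantities are then expressed through $\Fourier{p_t}(\xi)=e^{-t\Phi(\xi)}$ and Plancherel's theorem:
$$
  \|p_t(\cdot-x)-p_t(\cdot-y)\|_{L^2}^2 = \frac{1}{(2\pi)^d}\int_{\Rd} |e^{-i\scalp{\xi}{x}}-e^{-i\scalp{\xi}{y}}|^2\, e^{-2t\Re\Phi(\xi)}\,d\xi.
$$

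**The core estimate.**

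Using $|e^{-i\scalp{\xi}{x}}-e^{-i\scalp{\xi}{y}}|\le |\scalp{\xi}{x-y}|\wedge 2\le (|\xi|\,|x-y|)\wedge 2$, the integrand is controlled by $(|\xi|^2|x-y|^2\wedge 4)\,e^{-2t\Re\Phi(\xi)}$. The exponential weight $e^{-2t\Re\Phi(\xi)}$ concentrates the mass near $\xi=0$ as $t\to\infty$; by the hypothesis $\Re\Phi(\xi)\asymp f(|\xi|)$ near zero, this mass is essentially supported on $\{|\xi|\lesssim f^{-1}(1/t)\}$. On that region $|\xi|\,|x-y|\lesssim f^{-1}(1/t)|x-y|$, and integrating $e^{-2tf(|\xi|)}$ in polar coordinates produces a factor of order $(f^{-1}(1/t))^d$ (here one needs the doubling-type condition $\limsup_{s\to0}f^{-1}(2s)/f^{-1}(s)<\infty$ to bound the polar integral cleanly and to absorb the dyadic pieces of the weight). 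This yields a bound of the form $\|p_t(\cdot-x)-p_t(\cdot-y)\|_{\var}\le c\,(1+|x-y|)\,f^{-1}(1/t)$, or after a further optimization, $c\,f^{-1}(1/t)$ uniformly in $x,y$ once $t$ is large — the uniformity in $x,y$ coming from the trivial bound $\le 2$ together with the translation-invariance trick of splitting the line segment from $x$ to $y$ into $\lceil |x-y|\rceil$ unit steps and using the triangle inequality for $\|\cdot\|_{\var}$ along a telescoping chain, then noting $f^{-1}(1/t)\to 0$. The condition $\liminf_{r\to0}f(r)|\log r|<\infty$ is what guarantees $f^{-1}(1/t)$ decays, i.e.\ that the coupling is actually successful: it forces $f^{-1}(s)\ge e^{-c/s}$ for a sequence, which is exactly the borderline needed for $\lim_{t\to\infty}\|P_t(x,\cdot)-P_t(y,\cdot)\|_{\var}=0$ via the characterization recalled in the introduction.

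**Main obstacle.**

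The delicate point is the interplay between the two regimes. Condition \eqref{coup1} only controls $\Re\Phi$ at infinity — enough to get a density and to make the high-frequency tail of the Plancherel integral summable — while the hypothesis $\Re\Phi\asymp f$ is only assumed near zero; one must check that the high-frequency contribution is genuinely lower order than $f^{-1}(1/t)$ as $t\to\infty$, which requires combining the logarithmic growth at infinity with the fact that $t$ is large. A second subtlety is that $f$ need not be comparable to a regularly varying function globally, so the polar-coordinate integral $\int_0^\delta e^{-2tf(r)}r^{d-1}\,dr \asymp (f^{-1}(1/t))^d$ must be justified purely from strict monotonicity, differentiability near zero, and the doubling condition on $f^{-1}$ — this is where most of the technical care goes, via a change of variables $r=f^{-1}(u)$ and a dyadic decomposition of the $u$-integral. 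The passage from the $(1+|x-y|)f^{-1}(1/t)$ estimate to the $x,y$-free bound $c f^{-1}(1/t)$ claimed in the theorem, while conceptually easy, also needs the remark (already made in the excerpt) that $\|P_t(x,\cdot)-P_t(y,\cdot)\|_{\var}$ is monotone in $t$ and bounded by $2$, so that for each fixed $x,y$ one first reaches the regime where $f^{-1}(1/t)$ is small and then the constant can be chosen uniformly.
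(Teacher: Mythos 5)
Your route---Plancherel plus an $L^2$-to-$L^1$ interpolation for the density $p_t$ itself---is genuinely different from the paper's, and it breaks down at the tail estimate. The ball on which you apply Cauchy--Schwarz must have radius $R\asymp 1/f^{-1}(1/t)$ in order for the volume factor $|B|^{1/2}\asymp R^{d/2}$ to match the $L^2$-bound $\|p_t(\cdot-x)-p_t(\cdot-y)\|_{L^2}\lesssim |x-y|\,(f^{-1}(1/t))^{d/2+1}$. Outside that ball you assert that ``$p_t$ has light tails so the contribution is negligible'', but this is false for precisely the processes the theorem is about: for a rotationally invariant $\alpha$-stable process one has $p_t(z)\asymp t\,|z|^{-d-\alpha}$ for $|z|\gtrsim t^{1/\alpha}$, so $\int_{|z|>R}p_t(z)\,dz\asymp t\,R^{-\alpha}\asymp 1$ with the above choice of $R$; the exterior carries a fixed positive fraction of the mass and the crude bound by $2\int_{B^c}p_t$ gives nothing. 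Exploiting the cancellation between $p_t(\cdot-x)$ and $p_t(\cdot-y)$ in the exterior requires weighted (pointwise or $L^1$) decay of $\nabla p_t$, and on the Fourier side such weights correspond to derivatives of $e^{-t\Phi}$, hence to moments of $\nu$, which need not exist. This is exactly the obstruction the paper removes by \emph{truncating the L\'evy measure at level} $h(t)=1/\varphi^{-1}(1/t)$: in Theorem~\ref{coup} one factorizes $P_t=Q_tR_t$, discards $R_t$ by the contraction property of the total variation distance under convolution (see \eqref{pcoup0}), and applies Proposition~\ref{largetime} (which rests on the moment bound \eqref{largep1} for the rescaled truncated L\'evy measure and the Fa\`a di Bruno estimates of Proposition~\ref{lem:DESJ}) to get $|\nabla q_t(y)|\lesssim h(t)^{-(d+1)}(1+|y|/h(t))^{-(d+1)}$, which makes the exterior integral convergent. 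Your proposal is missing this truncation idea, and without it, or some substitute furnishing weighted control, the $L^1$ estimate does not close.

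Two smaller points. First, your telescoping argument goes the wrong way: chaining $\lceil|x-y|\rceil$ unit steps via the triangle inequality \emph{multiplies} the unit-step bound by $|x-y|$ rather than removing the $|x-y|$-dependence; the quantitative bound the paper actually derives (see \eqref{coup3}) is $C\,|x-y|\,\varphi^{-1}(1/t)$, and the uniformity in $x,y$ only comes from combining this with the trivial bound by $2$. Second, the hypothesis $\liminf_{r\to0}f(r)|\log r|<\infty$ is not what makes $f^{-1}(1/t)$ decay (that already follows from $f(0+)=0$ and monotonicity); as you correctly sense in your final sentence, its role is to give the \emph{lower} bound $f^{-1}(1/t)\gtrsim e^{-ct}$, so that the high-frequency part of $\int e^{-t\Re\Phi(\xi)}|\xi|^{d+2}\,d\xi$---which by \eqref{coup1} decays only exponentially in $t$---is still dominated by $(f^{-1}(1/t))^{2d+2}$. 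This is where the paper glues $c_1f$ near zero to $c_3\log(c_4+c_5r)$ at infinity to build the minorant $F$ in the proof of Theorem~\ref{th1}; your sketch never confronts this matching of the two regimes quantitatively.
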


It can be seen from the above remark on rotationally invariant
$\alpha$-stable L\'evy processes that the estimate in Theorem~\ref{th1}
is sharp.

\begin{remark}
(1) In our earlier paper~\cite{BRW}, in particular~\cite{BRW}, Theorem 1.1 and (1.3), we showed that the following condition on the L\'evy
measure $\nu$ ensures that a (pure jump) L\'evy process admits a
successful coupling:
\begin{equation}\label{brw-cond}
\nu(\mathrm{d}z) \geq|z|^{-d}g(|z|^{-2})\,\mathrm{d}z
\end{equation}
for some Bernstein function $g$. In the present paper, we use a
different condition in terms of the characteristic exponent $\Phi(\xi
)$. 
Let us briefly compare~\cite{BRW}, Theorem 1.1, and Theorem~\ref{th1}.
If \eqref{brw-cond} holds, then we know that
\[
\Phi(\xi)
=\Phi_{\rho}(\xi)+\Phi_{\mu}(\xi),
\]
where $\Phi_{\rho}$ and $\Phi_{\mu}$ denote the (pure-jump)
characteristic exponents with L\'{e}vy measures $\rho(\mathrm{d}z)=
|z|^{-d}g(|z|^{-2})\,\mathrm{d}z$ and $\mu=\nu-\rho$, respectively. Note that
\eqref{brw-cond} guarantees that $\mu$ is a nonnegative measure. By
\cite{JS}, Lemma 2.1, and some tedious, but otherwise routine,
calculations one can see that $\Phi_{\rho}(\xi)\asymp g(|\xi|^2)$
as $|\xi| \to0$.

If $g$ satisfies~\cite{BRW}, (2.10) and (2.11), -- these conditions
coincide with the asymptotic properties required of $f$ in Theorem \ref
{th1} --, we can apply Theorem~\ref{th1} to the symbol $\Phi_{\rho
}(\xi)$ with $f(s)=g(s^2)$, and follow the argument of~\cite{BRW}, Proposition 2.9 and Remark~2.10, to get a new proof of
\cite{BRW}, Theorem 1.1. Note that this argument uses the fact that, we can
(in law) decompose the L\'{e}vy process with exponent $\Phi(\xi)$
into two independent L\'{e}vy processes with characteristic exponents
$\Phi_{\rho}(\xi)$ and $\Phi_{\mu}(\xi)$, respectively.

(2) The considerations from (1) can be adapted to show that we may
replace the two-sided estimate $\Re\Phi(\xi)\asymp f(|\xi|)$ in
Theorem~\ref{th1} by $\Re\Phi(\xi)\geq cf(|\xi|)$; this, however,
requires that we know in advance that $\Phi(\xi)-cf(\xi)$ is a
characteristic exponent of some L\'{e}vy process. While this was
obvious under \eqref{brw-cond} and for the difference of two L\'evy
measures being again a nonnegative measure, there are no good
conditions in general when the difference of two characteristic
exponents is again an characteristic exponent of some L\'evy process.

(3) The present result, Theorem~\ref{th1}, trivially applies to most
subordinate stable L\'evy processes: here the characteristic exponent
is of the form $f(|\xi|^\alpha)$, $0<\alpha\leq2$, but the
corresponding L\'evy measures cannot be given in closed form. In
Example~\ref{ex1} below, we have a situation where the L\'evy measure
is known. Nevertheless, the methods of~\cite{BRW} are only applicable
in \textit{one} particular case, while Theorem~\ref{th1} applies to
\textit{all} non-degenerate settings.
\end{remark}

Now we turn to explicit gradient estimates for the semigroup of a
L\'evy process. For a function $u\in B_b(\R^d)$ we define
\[
|\nabla u(x)|
:=\limsup_{y\to x}\frac{|u(y)-u(x)|}{|y-x|},\qquad x\in\R^d.
\]
If $u$ is differentiable at $x$, then $|\nabla u(x)|$ is just the norm
of the gradient of $u$ at $x$. We are interested in sub-Markov
semigroups $P_t$ on $B_b(\R^d)$ which satisfy that for some positive
function $\phi$ on $(0,\infty)$
\[
\|\nabla P_t u\|_\infty
\leq\|u\|_\infty\phi(t),\qquad t>0, u\in B_b(\R^d).
\]
Similar uniform gradient estimates for Markov semigroups have attracted
a lot of attention in analysis and probability, for example, see
\cite{PW} and references therein. Because of the Markov property of the
semigroup $P_t$, $\phi(t)$ is decreasing with respect to $t$. Thus, it
is enough to obtain sharp estimates for $\phi(t)$ both as $t\to0$ and
$t\to\infty$. For L\'evy processes, we have the following theorem.
\begin{theorem}\label{th2}
Assume that \eqref{den1} holds. If there is a strictly increasing
function $f$ which is differentiable near infinity and which satisfies
\[
\limsup_{s\to\infty}f^{-1}(2 s)/f^{-1}(s)<\infty,
\]
and
\[
\Re\Phi(\xi)\asymp f(|\xi|)\qquad
\mbox{as } |\xi|\to\infty,
\]
then there exists a constant $c>0$ such that for $t>0$ small enough,
\begin{equation}\label{th21333}
\|\nabla P_t u\|_\infty\leq c\|u\|_\infty f^{-1}(1/t),\qquad
u\in{B}_b(\R^d).
\end{equation}

Similarly, let $f$ be a strictly increasing function which is
differentiable near zero and which satisfies
\[
\liminf_{r\to0}f(r)|\log r|<\infty,
\qquad
\limsup_{s\to0 }f^{-1}(2 s)/f^{-1}(s)<\infty
\]
and
\[
\Re\Phi(\xi)\asymp f(|\xi|)\qquad
\mbox{as } |\xi|\to0.
\]
Then there exists a constant $c>0$ such that \eqref{th21333} holds for
$t>0$ large enough.
\end{theorem}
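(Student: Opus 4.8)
The plan is to realize the gradient estimate through the Fourier-analytic representation of $P_t u$ and the explicit control it gives on the transition density. Since $X_t$ is a L\'evy process, $P_t u(x) = \int u(x+z)\,\mu_t(dz)$ where $\mu_t$ is the law of $X_t$; under \eqref{den1} we have $e^{-t\Phi}\in L_1(\R^d)$ for every $t>0$, so $\mu_t(dz)=p_t(z)\,dz$ with $p_t(z)=(2\pi)^{-d}\int e^{-i\scalp{z}{\xi}}e^{-t\Phi(\xi)}\,d\xi$. Because translation by $z$ commutes with the gradient, one gets $|\nabla P_t u(x)|\le \|u\|_\infty \int |\nabla p_t(z)|\,dz = \|u\|_\infty\,\|\nabla p_t\|_{L_1}$, and the latter is finite again by \eqref{den1} (this is exactly the Kulik--Kwapie\'n/Hartman--Wintner circle of ideas cited in the introduction). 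So the whole theorem reduces to the bound
$$
  \|\nabla p_t\|_{L_1(\R^d)} \le c\, f^{-1}(1/t)
$$
for small $t$ (first part) and for large $t$ (second part).

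To estimate $\|\nabla p_t\|_{L_1}$ I would split the Fourier integral for $\partial_j p_t$ at the scale $R=R(t):=f^{-1}(1/t)$, writing $\partial_j p_t(z) = (2\pi)^{-d}\int (-i\xi_j)e^{-i\scalp z\xi}e^{-t\Phi(\xi)}\,d\xi$ and cutting into $\{|\xi|\le R\}$ and $\{|\xi|>R\}$. On the inner region one uses brute force: $\int_{|\xi|\le R}|\xi|\,|e^{-t\Phi(\xi)}|\,d\xi \le \int_{|\xi|\le R}|\xi|\,d\xi \asymp R^{d+1}$, and combined with the trivial $L_\infty$-to-$L_1$ passage over the ball $|z|\le 1/R$ this contributes $O(R)=O(f^{-1}(1/t))$ to $\|\nabla p_t\|_{L_1}$; the complementary region $|z|>1/R$ is handled by integrating by parts in $\xi$ several times (legitimate once we know enough smoothness and decay of $e^{-t\Phi}$, which the strengthened Hartman--Wintner bound provides), producing negative powers of $|z|$ that are integrable over $|z|>1/R$ and again sum to $O(R)$. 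On the outer region $|\xi|>R$ one exploits $\Re\Phi(\xi)\asymp f(|\xi|)\ge f(R)=1/t$ (using monotonicity of $f$ and the hypothesis that $\Re\Phi\asymp f$ at the relevant end), so $|e^{-t\Phi(\xi)}|\le e^{-c\,t f(|\xi|)}$ decays fast enough that $\int_{|\xi|>R}|\xi|\,e^{-c t f(|\xi|)}\,d\xi$ and its $z$-weighted variants are also $O(R^{d+1})$ after the substitution $u=tf(|\xi|)$; here the doubling condition $\limsup f^{-1}(2s)/f^{-1}(s)<\infty$ is exactly what makes the tail integral comparable to its value at the cutoff rather than blowing up. Assembling the pieces gives $\|\nabla p_t\|_{L_1}\le c\,f^{-1}(1/t)$.

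The two regimes (small $t$, large $t$) are formally identical — only the range of $|\xi|$ that matters changes, large $|\xi|$ for small $t$ and small $|\xi|$ for large $t$ — so the argument for the second part is the same computation run near zero, where the extra hypothesis $\liminf_{r\to0}f(r)|\log r|<\infty$ plays the role that \eqref{den1} plays at infinity: it guarantees that $e^{-t\Phi}\in L_1$ and that $\nabla p_t\in L_1$ persist for large $t$ (this is the analogue of \eqref{coup1} with $t_0\to\infty$). The main obstacle, and the one place real care is needed, is justifying the repeated integration by parts and the decay estimates on $\{|\xi|>R\}$ purely from the one-sided asymptotic $\Re\Phi(\xi)\asymp f(|\xi|)$: we only control the real part of $\Phi$, not its derivatives, so rather than differentiating $e^{-t\Phi(\xi)}$ directly one should smooth with a cutoff $\chi(\xi/R)$ and move derivatives onto $\chi$, keeping only the crude pointwise bound $|e^{-t\Phi(\xi)}|\le e^{-ct f(|\xi|)}$ on the symbol itself. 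This is routine but is where the bookkeeping lives; once it is in place, the doubling hypotheses on $f^{-1}$ and the monotonicity of $f$ do the rest, and sharpness follows from the $\alpha$-stable example ($f(r)=r^\alpha$, $f^{-1}(1/t)=t^{-1/\alpha}$) already discussed in the introduction.
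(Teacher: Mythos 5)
Your overall architecture (reduce the theorem to $\|\nabla p_t\|_{L_1}\le c\,f^{-1}(1/t)$, split the Fourier integral at the scale $R=f^{-1}(1/t)$, and use the doubling condition on $f^{-1}$ to tame the tail integral $\int_{|\xi|>R}|\xi|^{k}e^{-ctf(|\xi|)}\,d\xi$) is close to the paper's in spirit, and the reduction $|\nabla P_tu(x)|\le\|u\|_\infty\,\|\nabla p_t\|_{L_1}$ is fine. The gap sits exactly in the step you yourself flag as ``where the bookkeeping lives'': producing integrable decay of $\nabla p_t(z)$ in $z$. To gain a factor $|z|^{-(d+1)}$ you must move $d+1$ derivatives in $\xi$ onto the integrand $\xi_j e^{-t\Phi(\xi)}$, and by the Leibniz rule some of these derivatives necessarily land on $e^{-t\Phi}$ no matter how you insert a cutoff $\chi(\xi/R)$ --- a cutoff only localizes, it cannot absorb derivatives coming from integration by parts. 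But for $|\beta|\ge 2$ one has $\partial^\beta\Phi(\xi)=-(i)^{|\beta|}\int y^\beta e^{i\xi\cdot y}\,\nu(dy)$, so these derivatives are controlled by (and in general require) the moments $\int_{|y|>1}|y|^{|\beta|}\,\nu(dy)$, which are not assumed finite; indeed $\Phi$ need not even be once differentiable (already $\int_{|y|>1}|y|\,\nu(dy)<\infty$ would be needed). Neither \eqref{den1} nor $\Re\Phi\asymp f$ supplies any such smoothness, and the crude pointwise bound $|e^{-t\Phi(\xi)}|\le e^{-ctf(|\xi|)}$ by itself only yields an $L_\infty$ bound on $\nabla p_t$, never an $L_1$ bound. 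So the proposal, as written, does not close.

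The paper's way around this is precisely the idea your sketch is missing: decompose $X_t$ in law into independent pieces $Y_t+Z_t$, where $Y_t$ has the L\'evy measure truncated to $\{|y|\le h(t)\}$ with $h(t)=1/\varphi^{-1}(1/t)$, so that $P_t=Q_tR_t$ and $\|\nabla P_tu\|_\infty=\|\nabla Q_t(R_tu)\|_\infty\le\|\nabla q_t\|_{L_1}\|u\|_\infty$; the gradient estimate is only ever proved for the small-jump density $q_t$. After rescaling by $h(t)$ the truncated L\'evy measure has all moments bounded uniformly in $t$ (Step 2 of Proposition \ref{largetime}), so the derivative bounds \eqref{eq:Diff1}--\eqref{eq:Diff2} and the Faa di Bruno/Leibniz computation of Proposition \ref{lem:DESJ} legitimately give $|\nabla q_t(y)|\le C(\varphi^{-1}(1/t))^{d+1}(1+\varphi^{-1}(1/t)|y|)^{-(d+1)}$, which integrates to the claim. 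Your verification that $\int e^{-t\Re\Phi(\xi)}|\xi|^{d+2}\,d\xi\lesssim (f^{-1}(1/t))^{2d+2}$ via the doubling condition is essentially the paper's proof of \eqref{gradi1} and is sound; note also that for large $t$ the paper does not rerun the Fourier computation but deduces the gradient bound directly from the total-variation estimate of Theorem \ref{th1}. To repair your argument you need either the truncation device above or a genuine substitute (e.g.\ finite-difference estimates on $e^{-t\Phi}$) for the unavailable integration by parts.
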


We will see in Remark~\ref{stable} below that Theorem~\ref{th2} is also
sharp for rotationally invariant $\alpha$-stable L\'evy processes.
Roughly speaking, Theorems~\ref{th1} and~\ref{th2} show that the
gradient estimate~\eqref{th21333} for a L\'evy process for small $t\ll
1$ depends on the asymptotic behaviour of the symbol $\Phi$ near
infinity, while \eqref{th21333} for large $t\gg1$ relies on the
asymptotic behaviour of the symbol $\Phi$ near zero. This situation is
familiar from estimates of the coupling time of L\'evy processes. More
details can be found in the examples given below.

In order to illustrate the power of Theorems~\ref{th1} and~\ref{th2},
we present two examples.

\begin{example}\label{ex0}
Let $X_t$ be a subordinate Brownian motion with symbol $f(|\xi|^2)$,
where $f(\lambda)= \lambda^{\alpha/2} (\log(1+\lambda))^{\beta
/2}$, $\alpha\in(0,2)$ and $\beta\in(-\alpha,2-\alpha)$. To see
that $f$ is indeed a Bernstein function we observe that $\lambda$,
$\log(1+\lambda)$ and $\lambda/\log(1+\lambda)$ are complete
Bernstein functions, and that for $\alpha,\beta\geq0$
\[
\lambda^{\alpha/2} \cdot\bigl(\log(1+\lambda)\bigr)^{\beta/2}\qquad
\mbox{is a complete Bernstein function if }
\frac\alpha2+\frac\beta2\leq1,
\]
while for $-\alpha\leq\beta\leq0 \leq\alpha$
\[
\lambda^{(\alpha-\beta)/2} \cdot\biggl(\frac\lambda{\log(1+\lambda
)}\biggr)^{\beta/2}
\qquad
\mbox{is a complete Bernstein function if }
\frac\alpha2+\frac\beta2\leq1.
\]
This follows easily from~\cite{SSV}, (Proof of) Proposition 7.10, see
also~\cite{SV}, Examples 5.15, 5.16.

There are two constants $c_1,t_0>0$ such that for all $x, y\in\R^d$
and $t\geq t_0$,
\[
\|P_t(x,\cdot)-P_t(y,\cdot)\|_{\var}\leq c_1{t^{-1/(\alpha+\beta)}},
\]
and there exists a constant $c_2>0$ such that for all $u\in B_b(\R^d)$,
\[
\|\nabla P_tu\|_\infty
\leq\cases{
c_2\bigl[{t}^{-1}\bigl(\log(1+{t}^{-1})\bigr)^{-\beta/2}\bigr]^{1/\alpha} \|u\|_\infty
&\quad for small $t\ll1$;\cr
c_2t^{-1/(\alpha+\beta)}\|u\|_\infty
&\quad for large $t\gg1$.
}
\]
\end{example}

\begin{example}\label{ex1}
Let $\mu$ be a finite nonnegative measure on the unit sphere $\sfera$
and assume that $\mu$ is nondegenerate in the sense that its support is
not contained in any proper linear subspace of $\R^d$. Let
$\alpha\in(0,2)$, $\beta\in(0,\infty]$ and assume\vadjust{\goodbreak} that the L\'evy
measure $\nu$ satisfies that for some constant $r_0>0$ and any $A\in
\Bb(\R^d)$,
\[
\nu(A)
\geq\int_0^{r_0}\int_{\sfera}\I_A(s\theta)s^{-1-\alpha} \,\mathrm{d}s \mu
(\mathrm{d}\theta)
+ \int_{r_0}^\infty\int_{\sfera}\I_A(s\theta)s^{-1-\beta}\,\mathrm{d}s \mu
(\mathrm{d}\theta).
\]
Then, by Theorem~\ref{th1}, there are two constants $c_1,t_0>0$ such
that for all $x,y\in\R^d$ and $t\geq t_0$,
\[
\|P_t(x,\cdot)-P_t(y,\cdot)\|_{\var}\leq c_1{t^{-1/(\beta\wedge2)}}.
\]

In the present situation, the methods of~\cite{BRW} only apply if $\mu$
is (essentially) the uniform measure on $\sfera$; this is not the case
for the condition of Theorem~\ref{th1}.

Moreover, Theorem~\ref{th2} shows that there exists a constant $c_2>0$
such that for all $u\in B_b(\R^d)$,
\[
\|\nabla P_tu\|_\infty
\leq
\cases{
c_2\|u\|_\infty t^{-1/\alpha}
&\quad for small $t\ll1$;\cr
c_2\|u\|_\infty t^{-1/(\beta\wedge2)}
&\quad for large $t\gg1$.
}
\]
\end{example}

Coupling techniques for L\'evy-driven SDEs and L\'evy-type processes
have been considered in the literature before, see, for example,
\cite{AK,NK1,NK,Jian}. As far as we know, however, only the papers
\cite{wang1,W2} by F.-Y. Wang deal with couplings of L\'evy-driven
SDEs. Wang shows the existence of successful couplings and gradient
estimates for Ornstein--Uhlenbeck processes driven by L\'{e}vy
processes.

The remaining part of this paper is organized as follows. In Section
\ref{section2}, we first present estimates for the derivatives of the
density for infinitely divisible distributions in terms of the
corresponding L\'evy measure; this part is of some interest on its own.
Then we use these estimates to investigate derivatives of the density
for L\'{e}vy processes, whose L\'{e}vy measures have (modified) bounded
support. In Section~\ref{section3}, we give the proofs of all the
theorems and examples stated in Section~\ref{section1}, by using the
results of Section~\ref{section2}. Some remarks and examples are also
included here to illustrate the optimality and the efficiency of
Theorems~\ref{th1} and~\ref{th2}.

\section{Derivatives of densities for infinitely divisible
distributions}\label{section2}

Let $\pi$ be an infinitely divisible distribution. It is well known
that its characteristic function $\Fourier{\pi}(\xi):=\int_{\R^d}
\mathrm{e}^{\mathrm{i}\scalp{\xi}{y}}\pi(\mathrm{d}y)$ is of the form $\exp(-\Phi(\xi))$, where
\[
\Phi(\xi) = \int_{y\neq0} \bigl(1-\mathrm{e}^{\mathrm{i}\scalp{\xi}{y}}+\mathrm{i}\scalp{\xi
}{y}\I_{B(0,1)}(y)\bigr)\nu(\mathrm{d}y),
\]
and $\nu$ is a L\'evy measure on $\Rd\setminus\{0\}$ such that
$\int_{y\neq0} (1\wedge|y|^2)\nu(\mathrm{d}y)<\infty$. In this section, we
first aim to study estimates for derivatives of the density of $\pi$.
As usual, we denote for every $n\in\N_0$ by $C_b^n(\R^d)$ the set of
all $n$-times continuously differentiable functions on $\R^d$ which
are, together with all their derivatives, bounded; for $n=0$ we use the
convention that $C_b^0(\R^d)=C_b(\R^d)$ denotes the set of continuous
and bounded functions on $\R^d$.

\begin{proposition}\label{lem:DESJ}
If for some $n$, $m\in\N_0$,
\begin{equation}\label{eq:Ass1}
\int \mathrm{e}^{-\Re\Phi(\xi)}(1+|\xi|)^{n+m}\,\mathrm{d}\xi<\infty,
\end{equation}
and
\begin{equation}\label{eq:Ass2}
\int_{|y|>1} |y|^{2\vee n} \nu(\mathrm{d}y)<\infty,
\end{equation}
then $\pi$ has a density $p\in C^{m+n}_b(\Rd)$ such that for every
$\beta\in\N_0^d$ with $|\beta|\leq m$,
\[
|\partial^\beta p(y)| \leq\psi(n,m,\nu) (1+|y|)^{-n},\qquad y\in\Rd,
\]
where for $n\geq0$
\[
\psi(n,m,\nu)
= C(n,d)\biggl(1+\int(|y|^2+|y|^{2\vee n}) \nu(\mathrm{d}y) \biggr)^{n} \int \mathrm{e}^{-\Re\Phi
(\xi)}(1+|\xi|)^{n+m}\,\mathrm{d}\xi.
\]
\end{proposition}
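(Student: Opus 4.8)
The plan is to obtain the density $p$ via Fourier inversion and then to estimate its derivatives by a combination of differentiation under the integral sign (to produce the $\partial^\beta$ on the left) and integration by parts against $y$ (to extract the decay factor $(1+|y|)^{-n}$). First I would note that \eqref{eq:Ass1} with $m=n=0$ already gives $\widehat\pi(\xi)=e^{-\Phi(\xi)}\in L^1(\Rd)$, so that $\pi$ has a bounded continuous density $p(y)=(2\pi)^{-d}\int e^{-i\scalp{\xi}{y}}e^{-\Phi(\xi)}\,d\xi$. Since $\int e^{-\Re\Phi(\xi)}(1+|\xi|)^{n+m}\,d\xi<\infty$, every $\xi\mapsto \xi^\gamma e^{-\Phi(\xi)}$ with $|\gamma|\le m$ is integrable, which justifies differentiating under the integral $m$ times; this yields $p\in C_b^m$ (and, using $n$ in place of $m$, $p\in C_b^{m+n}$ as claimed, once the decay is in place) with
$$
    \partial^\beta p(y) = (2\pi)^{-d}\int (-i\xi)^\beta e^{-i\scalp{\xi}{y}}e^{-\Phi(\xi)}\,d\xi,\qquad |\beta|\le m.
$$

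Next, to produce the factor $(1+|y|)^{-n}$ I would multiply by a suitable polynomial in $y$ and integrate by parts $n$ times in $\xi$. Concretely, it suffices to bound $|y^\alpha\,\partial^\beta p(y)|$ for every multi-index $\alpha$ with $|\alpha|\le n$, because $(1+|y|)^{n}\le C(n,d)\sum_{|\alpha|\le n}|y^\alpha|$. Writing $y^\alpha e^{-i\scalp{\xi}{y}} = (i\partial_\xi)^\alpha e^{-i\scalp{\xi}{y}}$ and integrating by parts (the boundary terms vanish by the integrability that makes all relevant factors decay), one gets
$$
    y^\alpha\,\partial^\beta p(y) = (2\pi)^{-d}\int e^{-i\scalp{\xi}{y}}\,(-i\partial_\xi)^\alpha\!\Big((-i\xi)^\beta e^{-\Phi(\xi)}\Big)\,d\xi.
$$
So the crux is a pointwise bound on $\big|(-i\partial_\xi)^\alpha\big((-i\xi)^\beta e^{-\Phi(\xi)}\big)\big|$ by an integrable function whose integral matches $\psi(n,m,\nu)$. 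By the Leibniz and Fa\`a di Bruno formulas, this expands into a sum of terms of the form $\xi^{\beta'}\,e^{-\Phi(\xi)}\prod_j \partial^{\gamma_j}\Phi(\xi)$ with $|\beta'|\le|\beta|\le m$, $\sum_j|\gamma_j|\le|\alpha|\le n$, and each $|\gamma_j|\ge 1$. Here the key input is a bound on the derivatives of the symbol: differentiating $\Phi(\xi)=\int(1-e^{i\scalp{\xi}{y}}+i\scalp{\xi}{y}\I_{B(0,1)}(y))\,\nu(dy)$ once in $\xi$ gives $\int y_k(\I_{B(0,1)}(y)-e^{i\scalp{\xi}{y}})\,\nu(dy)$, which is bounded by $\int(|y|\wedge|y|^2)\,\nu(dy)\lesssim\int(|y|^2+|y|^{2\vee n})\nu(dy)$ using \eqref{eq:Ass2}; derivatives of order $2\le|\gamma_j|\le n$ give $\int y^{\gamma_j}e^{i\scalp{\xi}{y}}\,\nu(dy)$, bounded by $\int(|y|^2+|y|^{2\vee n})\nu(dy)$ as well (splitting $|y|\le1$ and $|y|>1$ and using \eqref{eq:Ass2}). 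Thus each $|\partial^{\gamma_j}\Phi(\xi)|\le C(1+\int(|y|^2+|y|^{2\vee n})\nu(dy))$, and since there are at most $n$ such factors their product is bounded by $C(n,d)(1+\int(|y|^2+|y|^{2\vee n})\nu(dy))^{n}$. Pulling this constant out, what remains under the integral is $\sum_{|\beta'|\le m}|\xi^{\beta'}|\,e^{-\Re\Phi(\xi)}\le C(d,m)(1+|\xi|)^{m}\,e^{-\Re\Phi(\xi)}$; combined with the loss of at most $|\alpha|-\sum|\gamma_j|\le n$ powers of $\xi$ when derivatives fall on the $\xi^\beta$ factor rather than on $e^{-\Phi}$, the integrand is dominated by $(1+|\xi|)^{n+m}e^{-\Re\Phi(\xi)}$, whose integral is finite by \eqref{eq:Ass1}. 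Collecting the constants reproduces exactly $\psi(n,m,\nu)$.

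The main obstacle is the bookkeeping in the last step: one has to organize the Leibniz/Fa\`a di Bruno expansion of $(-i\partial_\xi)^\alpha((-i\xi)^\beta e^{-\Phi(\xi)})$ so that (i) the number of differentiated-symbol factors never exceeds $n$, giving the exponent $n$ in $\psi$, and (ii) the surviving power of $\xi$ is at most $n+m$, matching the weight in \eqref{eq:Ass1}. A clean way to do this is to prove by induction on $|\alpha|$ a structural formula for $\partial^\alpha e^{-\Phi}$ as $e^{-\Phi}$ times a polynomial (with $\pm1$ coefficients, indexed by set partitions) in the derivatives $\{\partial^{\gamma}\Phi: 1\le|\gamma|\le|\alpha|\}$, where each monomial has total derivative-order exactly $|\alpha|$ and at most $|\alpha|$ factors; then apply the ordinary Leibniz rule to bring in the $\xi^\beta$. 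The other point requiring a little care is the justification of the integration by parts and of differentiation under the integral for the symbol itself (dominated convergence using $|y|\wedge|y|^2$ near the diagonal and \eqref{eq:Ass2} away from it); these are routine given the stated assumptions. Finally, once $|\partial^\beta p(y)|\le\psi(n,m,\nu)(1+|y|)^{-n}$ is established for $|\beta|\le m$, the same inversion formula with $|\beta|$ up to $m$ and $n$ further integrations by parts shows $\partial^\beta p$ exists and is continuous for $|\beta|\le m+n$, so $p\in C_b^{m+n}(\Rd)$.
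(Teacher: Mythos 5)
Your overall strategy --- Fourier inversion, then integration by parts in $\xi$ against $y^\alpha$ with $|\alpha|\le n$, with $\partial^\alpha_\xi\bigl(\xi^\beta e^{-\Phi(\xi)}\bigr)$ expanded by Leibniz and Fa\`a di Bruno and controlled through bounds on $\partial^\gamma\Phi$ --- is exactly the paper's. But one step fails as written: your claim that the first derivative of the symbol is bounded uniformly in $\xi$. You assert $|\partial_k\Phi(\xi)|\le\int(|y|\wedge|y|^2)\,\nu(dy)$, which would require the pointwise bound $|y_k(1-e^{i\scalp{\xi}{y}})|\le|y|^2$ on $B(0,1)$; this is false for large $|\xi|$, and $\nabla\Phi$ is genuinely unbounded in general (for the symmetric $\alpha$-stable symbol $\Phi(\xi)=|\xi|^\alpha$ with $1<\alpha<2$ one has $|\nabla\Phi(\xi)|\asymp|\xi|^{\alpha-1}\to\infty$, even though $\int(|y|\wedge|y|^2)\,\nu(dy)<\infty$). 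Under the standing hypotheses only $\int_{|y|\le1}|y|^2\,\nu(dy)<\infty$ is available near the origin, so the correct estimate --- the one the paper obtains by Cauchy--Schwarz --- is $|\partial^\gamma\Phi(\xi)|\le(1+|\xi|)\int|y|^2\,\nu(dy)$ for $|\gamma|=1$; only for $|\gamma|\ge2$ is the derivative bounded by a $\xi$-independent moment of $\nu$.

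This error propagates into your bookkeeping of the weight $(1+|\xi|)^{n+m}$. You attribute the extra factor $(1+|\xi|)^{n}$ to the ``loss of powers of $\xi$ when derivatives fall on the $\xi^\beta$ factor''; but derivatives landing on $\xi^\beta$ only lower the degree, so they cannot produce growth --- with your (false) uniform bound on $\nabla\Phi$ the integrand would be dominated by $(1+|\xi|)^{m}e^{-\Re\Phi(\xi)}$ and the full strength of \eqref{eq:Ass1} would never be used. The factor $(1+|\xi|)^{n}$ actually comes from the at most $n$ first-order factors $\partial^{\gamma_j}\Phi$ in the Fa\`a di Bruno expansion, each contributing one power of $(1+|\xi|)$; this is precisely why \eqref{eq:Ass1} carries the weight $(1+|\xi|)^{n+m}$ and why the moment factor in $\psi(n,m,\nu)$ appears with exponent $n$. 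Once you replace your derivative bound by the Cauchy--Schwarz estimate and redo this count, the remainder of your argument (the reduction of $(1+|y|)^{n}$ to finitely many monomials $|y^\alpha|$ and the final passage to $p\in C_b^{m+n}$) goes through and coincides with the paper's proof.
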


\begin{pf}
The existence of the density $p\in C^{m+n}_b(\Rd
)$ is a consequence of \eqref{eq:Ass1} and~\cite{SA}, Proposition 28.1, or~\cite{Pic}, Proposition 0.2.

To prove the second assertion, we recall some necessary facts and
notations. Given a function $f\in L^1(\R^d)$, its Fourier transform is
given by
\[
\Fourier{f}(\xi)=\int_{\R^d} f(y) \mathrm{e}^{\mathrm{i}\scalp{\xi}{y}}\,\mathrm{d}y.
\]
For $\xi\in\R^d$ and a multiindex $\beta=(\beta_1,\beta_2,\ldots,
\beta_d)\in\N_0^d$, we set $M_{\beta}(\xi):=\xi^\beta=
\xi_1^{\beta_1}\xi_2^{\beta_2}\cdots\xi_d^{\beta_d}$. If
$\Fourier{f}\in
C^N(\R^d)$ and $\partial^{\gamma}(M_\beta\Fourier{f})\in L^1(\Rd
)$ for
$N\in\N_0$ and every $\gamma\in\N_0^d$ such that $|\gamma|\leq N$,
then, using the inverse Fourier transform and the integration by parts
formula, we obtain that for every $\delta\in\N_0^d$ with $|\delta
|\leq
N$
\[
y^\delta \partial^\beta f(y)
= (2\uppi)^{-d} (-1)^{|\beta|} (\mathrm{i})^{|\beta|-|\delta|} \int\partial
^\delta[M_\beta\Fourier{f}](\xi) \mathrm{e}^{-\mathrm{i}\scalp{y}{\xi}} \,\mathrm{d}\xi.
\]
This yields
\begin{equation}\label{eq:F2}
|y^\delta\partial^\beta f(y)|
\leq(2\uppi)^{-d} \int| \partial^\delta[M_\beta\Fourier{f}](\xi)|
\,\mathrm{d}\xi.
\end{equation}
In particular, for every $n\in\N_0$,
\begin{equation}\label{eq:F3}
|y_k|^n |\partial^\beta f(y)|
\leq(2\uppi)^{-d} \int\biggl| \frac{\partial^n}{\partial\xi_k^n}
[M_\beta\Fourier{f}](\xi)\biggr|\, \mathrm{d}\xi.
\end{equation}

For $n=0$, the required assertion immediately follows from
\eqref{eq:F2} if we use $f=p$ and $\delta=0$. If $n>0$, then for every
$\beta\in\N_0^d$ such that $|\beta|=1$ we have
\[
\partial^\beta\Phi(\xi)
= -\mathrm{i} \int y^\beta\bigl(\mathrm{e}^{\mathrm{i}\scalp{\xi}{y}}-\I_{B(0,1)}(y)\bigr)\nu(\mathrm{d}y).
\]
By the H\"older inequality,
\begin{eqnarray}\label{eq:Diff1}
&&|\partial^\beta\Phi(\xi)|\nonumber\\
&&\quad\leq\biggl[\int|y|^2\nu(\mathrm{d}y)\biggr]^{1/2}
\biggl[2\int_{B(0,1)} \bigl(1-\cos(\scalp{\xi}{y})\bigr)\nu(\mathrm{d}y)
+ \nu(B(0,1)^c)\biggr]^{1/2} \nonumber\\
&&\quad\leq \biggl[ \int|y|^2\nu(\mathrm{d}y)\biggr]^{1/2}\biggl[|\xi|^2 \int_{B(0,1)} |y|^2 \nu(\mathrm{d}y)
+ \nu(B(0,1)^c)\biggr]^{1/2} \\
&&\quad\leq \int|y|^2\nu(\mathrm{d}y)\cdot(|\xi|^2 +1)^{1/2} \nonumber\\
&&\quad\leq (1+|\xi|) \int|y|^2\nu(\mathrm{d}y).\nonumber
\end{eqnarray}
On the other hand, for $1<|\beta|\leq n$, we have
\[
\partial^\beta\Phi(\xi) = -(\mathrm{i})^{|\beta|}\int y^\beta \mathrm{e}^{\mathrm{i}\scalp
{\xi}{y}}\nu(\mathrm{d}y),
\]
and so
\begin{equation}\label{eq:Diff2}
|\partial^\beta\Phi(\xi)| \leq\int|y|^{|\beta|} \nu(\mathrm{d}y).
\end{equation}
For symmetric L\'evy measures $\nu$ similar estimates are due to Hoh
\cite{Hoh}, see also~\cite{Jacob1}, Theorem~3.7.13.

Let $k\in\{1,\dots,d\}$ and $M\in\N$ with $M\leq n$. We use Faa di
Bruno's formula, see~\cite{ConSav96}, to obtain
\[
\frac{\partial^{M}}{\partial\xi_k^{M}} \Fourier{p}(\xi)
= M! \exp(-\Phi(\xi))
\sum_{j=1}^{M} \sum_{u(M,j)} \prod_{l=1}^{M}
\biggl(\frac{\partial^l (-\Phi)}{\partial\xi_k^l}(\xi)\biggr)^{\lambda
_l}\big/((\lambda_l!)(l!)^{\lambda_l}),
\]
where
\[
u(M,j)=\Biggl\{(\lambda_1,\dots,\lambda_{M})\dvt \lambda_l\in\N_0,
\sum_{l=1}^{M}\lambda_l=j, \sum_{l=1}^{M}l\lambda_l=M\Biggr\}.
\]
This, \eqref{eq:Diff1} and \eqref{eq:Diff2} yield
\begin{eqnarray*}
\biggl|\frac{\partial^{M}}{\partial\xi_k^{M}} \Fourier{p}(\xi)\biggr|
& \leq& |\exp(-\Phi(\xi))| \sum_{j=1}^{M} \sum_{u(M,j)}\prod_{l=1}^{M}
\frac{M!}{(\lambda_{l}!)(l!)^{\lambda_{l}}}
\biggl[(1+|\xi|)\int_{\Rd} |y|^{l\vee2}\nu(\mathrm{d}y)\biggr]^{\lambda_{l}} \\
& \leq& \mathrm{e}^{-\Re\Phi(\xi)} \sum_{j=1}^{M}
\biggl[(1+|\xi|)\int_{\Rd} (|y|^2+|y|^{2\vee n} )\nu(\mathrm{d}y)\biggr]^{j}
\sum_{u(M,j)}\prod_{l=1}^{M}
\frac{M!}{(\lambda_{l}!)(l!)^{\lambda_{l}}} \\
& \leq& \mathrm{e}^{-\Re\Phi(\xi)}(1+|\xi|)^M \sum_{j=1}^{M}
\biggl[\int_{\Rd} (|y|^2+|y|^{2\vee n} )\nu(\mathrm{d}y)\biggr]^{j}
\sum_{u(M,j)}\prod_{l=1}^{M}
\frac{M!}{(\lambda_{l}!)(l!)^{\lambda_{l}}} \\
& \leq& c_1(n) \mathrm{e}^{-\Re\Phi(\xi)}(1+|\xi|)^{n}
\biggl[1+\int_{\Rd} (|y|^2+|y|^{2\vee n} )\nu(\mathrm{d}y)\biggr]^{n}.
\end{eqnarray*}
We note that this inequality remains valid for $M=0$.

For $\beta\in\N_0^d$ with $|\beta|\leq m$, we can use the Leibniz rule
to get
\begin{eqnarray*}
\biggl|\frac{\partial^{n}}{\partial\xi_k^{n}} (M_\beta\Fourier{p})(\xi
) \biggr|
&\leq& \sum_{j=0}^{n} {n \choose j} \biggl| \frac{\partial^{j}}{\partial
\xi_k^{j}} M_\beta(\xi)
\frac{\partial^{n-j}}{\partial\xi_k^{n-j}} \Fourier{p}(\xi)\biggr| \\
&\leq& (1+|\xi|)^{|\beta|} \sum_{j=0}^{n} {n \choose j} \biggl|
\frac{\partial^{n-j}}{\partial\xi_k^{n-j}} \Fourier{p}(\xi)\biggr| \\
&\leq& c_2(n) \mathrm{e}^{-\Re\Phi(\xi)}(1+|\xi|)^{n+m}
\biggl[1+\int_{\Rd} (|y|^2+|y|^{2\vee n} )\nu(\mathrm{d}y)\biggr]^{n}.
\end{eqnarray*}
By \eqref{eq:F3}, we see
\[
|y_k|^{n} |\partial^\beta p(y)|
\leq c_2(n)\biggl[1+\int_{\Rd} (|y|^2+|y|^{2\vee n} )\nu(\mathrm{d}y)\biggr]^{n}
\int \mathrm{e}^{-\Re\Phi(\xi)}(1+|\xi|)^{n+m} \,\mathrm{d}\xi.
\]
Finally,
\[
(1+|y|)^{n} \leq2^{n-1}(1+|y|^{n}) \leq2^{n-1} d^{n/2} \Biggl(1+\sum
_{k=1}^d |y_k|^{n}\Biggr),
\]
and the required assertion follows with $C(n,d)=2^{n-1}
d^{n/2}(d+1)c_2(n)$.
\end{pf}

We will now study the derivatives of transition densities for L\'evy
processes with (modified) bounded support. For this, we need
Proposition~\ref{lem:DESJ}. Let $\Phi$ be the symbol (i.e., the
characteristic exponent) of a L\'evy process and consider for every
$r>0$ the semigroup of infinitely divisible measures $\{\tilP_t^r,
t\geq0\}$ whose Fourier transform is of the form
$\widehat{\tilP}^r_t(\xi)=\exp(-t\tilPhi_r(\xi))$, where
\[
\tilPhi_r(\xi) = \int_{|y|\leq r} (1-\mathrm{e}^{\mathrm{i}\scalp{\xi}{y}}+\mathrm{i}\scalp
{\xi}{y})\nu(\mathrm{d}y)
\]
($\nu$ is the L\'evy measure of the symbol $\Phi$). For $\rho>0$ and
$t>0$, we define
\begin{equation}\label{eq:rho}
\varphi(\rho)=\sup_{|\eta|\leq\rho}\Re\Phi(\eta)
\quad\mbox{and}\quad
h(t) := \frac1{\varphi^{-1}(1/t)}.
\end{equation}

\begin{proposition} \label{largetime} Assume that \eqref{coup1}
holds, and there exist
$m\in\N_0$ and $c,t_1>0$ such that for all $t\geq t_1$,
\begin{equation}\label{large3}
\int\exp(-(t\Re\Phi(\xi)))|\xi
|^{m}\,\mathrm{d}\xi\leq c\biggl(\varphi^{-1}\biggl(\frac{1}{t}\biggr)\biggr)^{m+d}.
\end{equation}
Then there is a constant $t_2=t_2(m,d)>0$ such that for any $t\geq t_2$,
there exists a density $p^{h(t)}_t\in C_b^{m}(\Rd)$ of
$\tilP^{h(t)}_t$, and for every $n\in\N_0$ and $\beta\in\N_0^d$ with
$|\beta|\leq m-n,$
\[
\bigl|\partial^\beta_y p^{h(t)}_t(y)\bigr| \leq C(m, n, {|\beta|},\Phi)
(\varphi^{-1}(t^{-1}))^{d+|\beta|} \bigl(1+\varphi^{-1}(t^{-1})|y|\bigr)^{-n},\qquad
y\in\Rd.
\]
\end{proposition}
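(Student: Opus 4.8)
The plan is to apply Proposition \ref{lem:DESJ} to the scaled infinitely divisible measure $\tilP^{h(t)}_t$, where $h(t) = 1/\varphi^{-1}(1/t)$ is chosen precisely so that the ``effective range'' $h(t)$ of the truncated L\'evy measure matches the natural length scale $\varphi^{-1}(1/t)$ at time $t$. The first step is to identify the characteristic exponent and L\'evy measure of $\tilP^{h(t)}_t$: it is $t\tilPhi_{h(t)}(\xi)$, with L\'evy measure $t\,\nu(dy)\restriction_{\{|y|\le h(t)\}}$. I would then verify the two hypotheses \eqref{eq:Ass1} and \eqref{eq:Ass2} of Proposition \ref{lem:DESJ} for this measure, with the exponent $n+m$ there replaced by our $d + |\beta| + n$ (so that, in the notation of Proposition \ref{lem:DESJ}, one takes the integrability parameter to be $|\beta|+n$ and ``$m$'' there to be $d$, or some equivalent bookkeeping); since $|\beta|\le m-n$, the total order is at most $d+m$, which is why \eqref{large3} is stated with exponent $m$ in the polynomial weight.

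The verification of \eqref{eq:Ass2} is easy: the truncated measure is supported in $\{|y|\le h(t)\}$, so $\int_{|y|>1}|y|^{2\vee N}\,t\,\nu(dy) = 0$ once $h(t)\le 1$, which holds for all large $t$ because $\varphi^{-1}(1/t)\to\infty$ under \eqref{coup1} (here \eqref{coup1} is what guarantees $\varphi^{-1}(1/t)\to\infty$, hence $h(t)\to 0$, and also that the density exists). For \eqref{eq:Ass1}, the key point is to compare $\Re(t\tilPhi_{h(t)}(\xi))$ with $t\Re\Phi(\xi)$: the difference $t\,\Re\big(\Phi(\xi)-\tilPhi_{h(t)}(\xi)\big)$ comes from the large jumps $|y|>h(t)$ and from the compensator terms, and one shows it is bounded (or grows at most like $\log(1+|\xi|)$ with a controlled constant) using the standard estimates $1-\cos(\xi\cdot y)\le 2$ and $1-\cos(\xi\cdot y)\le |\xi|^2|y|^2$; consequently $e^{-\Re(t\tilPhi_{h(t)}(\xi))}\le C\,e^{-t\Re\Phi(\xi)}$ up to a polynomial factor, and then \eqref{large3} gives $\int e^{-\Re(t\tilPhi_{h(t)}(\xi))}(1+|\xi|)^{d+m}\,d\xi \le c'\,(\varphi^{-1}(1/t))^{2d+m}$, which is finite. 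Substituting the change of variables $\xi = \varphi^{-1}(1/t)\,\zeta$ in this integral is what produces the homogeneity $(\varphi^{-1}(1/t))^{d+|\beta|}$ in the final bound.

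Once the hypotheses are checked, Proposition \ref{lem:DESJ} yields a density $\tilp^{h(t)}_t\in C_b^{d+m}$ (in particular $C_b^m$) together with the pointwise bound $|\partial^\beta \tilp^{h(t)}_t(y)|\le \psi(n, |\beta|, t\nu\restriction)\,(1+|y|)^{-n}$. The last step is to insert the estimates obtained above: the integral factor in $\psi$ is controlled by \eqref{large3} after rescaling, giving the power $(\varphi^{-1}(1/t))^{d+|\beta|}$; the moment factor $(1+\int(|y|^2+|y|^{2\vee N})\,t\,\nu(dy))^{N}$ is bounded because the truncated measure has all moments controlled by $t\int_{|y|\le h(t)}|y|^2\,\nu(dy)\le t\,\varphi(1/h(t))\cdot(\text{const})$-type bounds, which stay bounded for large $t$; and finally one upgrades $(1+|y|)^{-n}$ to $(1+\varphi^{-1}(1/t)|y|)^{-n}$ by tracking the rescaling $y\mapsto \varphi^{-1}(1/t)\,y$ (equivalently, applying Proposition \ref{lem:DESJ} directly to the rescaled density $\varphi^{-1}(1/t)^{-d}\tilp^{h(t)}_t(\cdot/\varphi^{-1}(1/t))$, whose L\'evy measure is supported in the unit ball). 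The main obstacle is the bookkeeping in this last rescaling argument: one must make sure the moment bounds, the exponential integrability, and the polynomial weight all transform consistently under $\xi\mapsto \varphi^{-1}(1/t)\zeta$, and that the difference-of-exponents estimate $t\,\Re(\Phi-\tilPhi_{h(t)})$ is genuinely uniformly controlled rather than merely finite for each $t$ — this is where the doubling-type regularity of $\varphi$ (implicit via \eqref{coup1} and the structure of $h$) and the precise choice $h(t)=1/\varphi^{-1}(1/t)$ are used.
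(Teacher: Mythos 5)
Your overall architecture --- rescale by $h(t)$ so that the truncated L\'evy measure lands in the unit ball, apply Proposition \ref{lem:DESJ} to the rescaled law, and undo the scaling to produce the factor $(\varphi^{-1}(t^{-1}))^{d+|\beta|}$ --- is exactly the paper's. However, there are two concrete problems.

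First, you have the asymptotics of $h$ backwards. Since $\varphi(\rho)=\sup_{|\eta|\le\rho}\Re\Phi(\eta)$ is increasing with $\varphi(0+)=0$, as $t\to\infty$ we have $\varphi^{-1}(1/t)\to 0$ and hence $h(t)=1/\varphi^{-1}(1/t)\to\infty$, not $0$. So your first route for verifying \eqref{eq:Ass2} --- ``the truncated measure is supported in $\{|y|\le h(t)\}$, which lies inside the unit ball for large $t$'' --- fails in this large-time proposition (that picture is the right one for the small-time counterpart, Proposition \ref{smalltime}). Only the \emph{rescaled} L\'evy measure $\lambda_t(B)=t\int_{|y|\le h(t)}\I_B(y/h(t))\,\nu(dy)$ is supported in $B(0,1)$; the rescaled route is therefore not ``equivalent'' to the direct one, it is the only one that works.

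Second, the technical heart of the proof is asserted rather than proved. To extract a constant $C(m,n,|\beta|,\Phi)$ \emph{independent of $t$} from Proposition \ref{lem:DESJ}, you need (i) the moments $\int|y|^{N}\,\lambda_t(dy)$ bounded uniformly in $t$, and (ii) $t\,\nu(B(0,h(t))^c)$ bounded uniformly in $t$, which is what controls the gap between $\Re G_t(\xi)$ and $t\,\Re\Phi(\xi/h(t))$ and lets you transport \eqref{large3} to the truncated exponent. Both reduce to showing $t\int \frac{(|y|/h(t))^2}{1+(|y|/h(t))^2}\,\nu(dy)\le C$ uniformly; the paper does this by representing $u^2/(1+u^2)$ as an integral of $1-\cos$ against a rapidly decaying kernel and invoking the subadditivity of $\sqrt{\Re\Phi}$ to get $\Re\Phi(\xi/h(t))\le 2(1+|\xi|^2)\,\varphi(1/h(t))=2(1+|\xi|^2)/t$. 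Your stated bound $t\int_{|y|\le h(t)}|y|^2\,\nu(dy)\lesssim t\,\varphi(1/h(t))$ is off by a factor $h(t)^2$ (which blows up as $t\to\infty$); the quantity that is actually uniformly bounded is the rescaled moment $t\int(|y|/h(t))^2\,\nu(dy)$. Without these two uniform estimates your argument only yields finiteness of the relevant integrals for each fixed $t$, not the claimed $t$-independent constant in the final bound.
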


\begin{pf} \textit{Step 1}.
For $\xi\in\R^d$,
\begin{eqnarray}\label{plargetime1}
|\widehat{\tilP}^r_t(\xi)|&=&\exp\biggl(-t\int_{|y|<r} \bigl(1-\cos({\scalp
{\xi}{y}})\bigr)\nu(\mathrm{d}y)\biggr)\nonumber\\
&=&\exp\biggl(-t\biggl(\Re\Phi(\xi)-\biggl(\int_{|y|\geq r} \bigl(1-\cos({\scalp{\xi
}{y}})\bigr)\nu(\mathrm{d}y)\biggr)\biggr)\biggr)\\
&\leq&\exp(-t(\Re\Phi(\xi)))\exp(2t\nu(B(0,r)^c)).\nonumber
\end{eqnarray}
By \eqref{coup1} and~\cite{SA}, Proposition 28.1, it follows that there
exists $t_3:=t_3(d)>0$ such that for all $r>0$ and for any $t\geq t_3$,
the measure $\tilP_t^r$ has a density $p^{r}_t\in C_b(\R^d)$.

\textit{Step 2.} For $t\geq t_3$, we define
$g_t(y)=h(t)^{d}p^{h(t)}_t(h(t) y)$. We consider the infinitely
divisible distribution $\pi_t(\mathrm{d}y)=g_t(y)\,\mathrm{d}y$. Its Fourier transform is
given by
\begin{eqnarray}\label{symbol}
\widehat{\pi}_t(\xi)
&=&(h(t))^{d}\int \mathrm{e}^{\mathrm{i}{\scalp{\xi}{y}}}p^{h(t)}_t(h(t) y)\,\mathrm{d}y\nonumber\\
&=&\int \mathrm{e}^{\mathrm{i} \scalp{\xi}{y}/h(t)}p^{h(t)}_t(y)\,\mathrm{d}y\nonumber\\
&=&\exp\biggl(-t\int_{|y|\leq h(t)}\biggl(1-\mathrm{e}^{\mathrm{i} \scalp{\xi}{y}/h(t)} +\frac
{\mathrm{i}\scalp{\xi}{y}}{h(t)}\biggr)\nu(\mathrm{d}y)\biggr)\\
&=&\exp\biggl(-\int_{|y|\leq1}(1-\mathrm{e}^{\mathrm{i} {\scalp{\xi}{y}}}+\mathrm{i}{\scalp{\xi
}{y}})\lambda_t(\mathrm{d}y)\biggr)\nonumber\\
&=&\exp(-G_t(\xi)),\nonumber
\end{eqnarray}
where $\lambda_t$ is the L\'evy measure of $\pi_t$, that is, for any Borel
set $B\subset\Rd\setminus\{0\}$,
\[
\lambda_t(B) = t\int_{|y|\leq h(t)} \I_B\bigl(y/h(t)\bigr)\nu(\mathrm{d}y).
\]
For $n\geq2$, we have
\begin{eqnarray*}
\int|y|^n \lambda_t(\mathrm{d}y)
&=& t\int_{|y|\leq h(t)} \biggl(\frac{|y|}{h(t)}\biggr)^n \nu(\mathrm{d}y)\\
&\leq& t\int_{|y|\leq h(t)} \biggl(\frac{|y|}{h(t)}\biggr)^2 \nu(\mathrm{d}y)\\
&\leq&2t \int_{|y|\leq h(t)} \frac{(|y|/h(t))^2}{1+(|y|/h(t))^2}\nu
(\mathrm{d}y)\\
&\leq& 2t \int\frac{(|y|/h(t))^2}{1+(|y|/h(t))^2}\nu(\mathrm{d}y)\\
&=&2t\int\!\!\!\int\bigl(1-\cos\bigl(y/h(t)\cdot\xi\bigr)\bigr)f_d(\xi)\,\mathrm{d}\xi\nu(\mathrm{d}y)\\
&=&2t\int\!\!\!\int\bigl(1-\cos\bigl(y\cdot\xi/h(t)\bigr)\bigr)\nu(\mathrm{d}y)f_d(\xi)\,\mathrm{d}\xi\\
&=& 2t\int\Re\Phi\biggl(\frac{\xi}{h(t)}\biggr) f_d(\xi)\,\mathrm{d}\xi,
\end{eqnarray*}
where
\[
f_d(\xi)=\frac{1}{2}\int_0^\infty(2\uppi\rho)^{-d/2}\mathrm{e}^{-|\xi
|^2/(2\rho)}\mathrm{e}^{-\rho/2}\,\mathrm{d}\rho.
\]
Obviously, $f_d(\xi)$ possesses all moments, see, for example,
\cite{Schill}, (2.5) and (2.6). By using several times the
subadditivity of $\eta\mapsto\sqrt{\Re\Phi(\eta)}$, we can easily find,
see, for example, the proof of~\cite{Schill2}, Lemma 2.3,
\[
\Re\Phi\biggl(\frac{\xi}{h(t)}\biggr)
\leq2(1+|\xi|^2) \sup_{|\eta|\leq1/h(t)} \Re\Phi(\eta) =
2(1+|\xi|^2) \frac1t.
\]
So,
\[
\int\Re\Phi\biggl(\frac{\xi}{h(t)}\biggr)f_d(\xi)\,\mathrm{d}\xi
\leq2 \sup_{|\eta|\leq1/h(t)} \Re\Phi(\eta) \int(1+|\xi
|^2)f_d(\xi)\,\mathrm{d}\xi
=: \frac{c_0}t.
\]
According to the definition of $h(t)$, we get that for any $t>0$,
\begin{equation}\label{largep1}
\int|y|^n \lambda_t(\mathrm{d}y)\leq2c_0.
\end{equation}

\textit{Step 3.}
It is easily seen from \eqref{symbol} that the characteristic exponent
of $\pi_t$ is $G_t(\xi)$, and
\[
\Re G_t(\xi)
= t\Re\bigl(\tilPhi_{h(t)}(h(t)^{-1}\xi)\bigr).
\]
Thus,
\begin{eqnarray*}
\Re G_t(\xi)
&=& t \biggl[\Re({\Phi}(h(t)^{-1}\xi)) - \int_{|y|>h(t)}\bigl(1-\cos({\scalp
{h(t)^{-1}\xi}{y}})\bigr)\nu(\mathrm{d}y) \biggr]\\
&\geq& t\Re({\Phi}(h(t)^{-1}\xi))- 2t \nu(B(0,h(t))^c).
\end{eqnarray*}
For any $t>0$,
\begin{eqnarray*}
\nu(B(0,h(t))^c)
&\leq&2\int_{|y|>h(t)} \frac{(|y|/h(t))^2}{1+|y|^2/h(t)^2}\nu(\mathrm{d}y)\\
&\leq&2 \int\frac{(|y|/h(t))^2}{1+(|y|/h(t))^2}\nu(\mathrm{d}y)\\
&=& 2 \int\!\!\!\int\bigl(1-\cos({\scalp{h(t)^{-1}\xi}{y}})\bigr)\nu(\mathrm{d}y) f_d(\xi)\,\mathrm{d}\xi
\\
&=& 2 \int\Re\Phi\biggl(\frac{\xi}{h(t)}\biggr) f_d(\xi)\,\mathrm{d}\xi\\
&\leq&2c_0\sup_{|\eta|\leq1/h(t)} \Re\Phi(\eta),
\end{eqnarray*}
where the last two lines follow from the same arguments as those
leading to \eqref{largep1}. Hence, for any $t>0$, we have
\[
t \nu(B(0,h(t))^c)
\leq2c_0t\sup_{|\eta|\leq1/h(t)} \Re\Phi(\eta)
= 2c_0.
\]
By \eqref{large3}, for $m\in\N_0$ and $c_1>0$, there exists
$t_4:=t_4(m,\Phi, c_1)\geq t_3$ such that for any $t\geq t_4$,
\[
\int\exp(-(t\Re\Phi(\xi)))|\xi|^m\,\mathrm{d}\xi
\leq c_1h(t)^{-(m+d)}.
\]
Therefore, we obtain
\begin{eqnarray}\label{largep2}
&&\int\exp[-\Re(G_t(\xi))]|\xi|^{m}\,\mathrm{d}\xi\nonumber\\
&&\quad\leq \mathrm{e}^{4c_0}\int\exp[-t\Re({\Phi}(\xi/h(t)))]|\xi|^m\,\mathrm{d}\xi\nonumber\\[-8pt]\\[-8pt]
&&\quad= \mathrm{e}^{4c_0}h(t)^{m+d}\int\exp[-(t\Re\Phi(\xi))]|\xi|^m\,\mathrm{d}\xi\nonumber\\
&&\quad=c_1\mathrm{e}^{4c_0}<\infty.\nonumber
\end{eqnarray}

\textit{Step 4.}
According to \eqref{largep1}, \eqref{largep2} and Proposition
\ref{lem:DESJ}, $g_t\in C_b^{m}(\Rd)$ for any $t\geq t_4$, and for every
$n\in\N_0$ and $\beta\in\N_0^d$ with $|\beta|\leq m-n$ we get
\[
|\partial^\beta_y g_t(y)|
\leq C(m, n, {|\beta|},\Phi) (1+|y|)^{-n},\qquad
y\in\Rd.
\]
This finishes the proof since $\partial^\beta_y
g_t(y)=h(t)^{d+|\beta|}\partial^\beta_yp^{h(t)}_t(h(t) y)$.
\end{pf}

The following result is the counterpart of Proposition~\ref{largetime},
which presents estimates for the derivatives of the densities
$p^{h(t)}_t$ for small time. Recall the definitions of $\varphi$
and $h$ from \eqref{eq:rho}.

\begin{proposition}\label{smalltime}
Assume that \eqref{den1} is satisfied, and there exist constants
$m\in\N_0$ and $c,t_0>0$ such that for every $0<t\leq t_0$,
\begin{equation}\label{samll3}
\int\exp[-(t\Re\Phi(\xi))] |\xi|^m\,\mathrm{d}\xi
\leq c\bigl(\varphi^{-1}(1/t)\bigr)^{m+d}.
\end{equation}
Then there is a constant $t_1>0$ such that for all $0<t\leq t_1$, there
exists a density $p^{h(t)}_t\in C_b^{m}(\Rd)$ of $\tilP^{h(t)}_t$.
Moreover, for every $n\in\N_0$ and $\beta\in\N_0^d$ with $|\beta
|\leq
m-n$,
\[
\bigl|\partial^\beta_y p^{h(t)}_t(y)\bigr|
\leq C(m, n, {|\beta|},\Phi, t_0) (\varphi^{-1}(t^{-1}))^{d+|\beta
|} \bigl(1+\varphi^{-1}(t^{-1})|y|\bigr)^{-n},\qquad y\in\Rd.
\]
\end{proposition}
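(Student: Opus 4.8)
The plan is to mirror the proof of Proposition \ref{largetime} almost verbatim, replacing "for $t\ge t_i$" by "for $0<t\le t_i$" throughout, and to check that the only two places where the growth direction of $t$ actually enters the argument — namely Step 1 (existence of a continuous density, which used \eqref{coup1}) and Step 3 (applying the integrability hypothesis \eqref{large3}, here \eqref{samll3}) — go through under \eqref{den1}. First I would recall that $\Re\Phi\ge 0$ and that $h(t)=1/\varphi^{-1}(1/t)$ is monotone, with $h(t)\to 0$ and $1/h(t)\to\infty$ as $t\to 0$; since \eqref{den1} forces $\varphi(\rho)\to\infty$, the quantity $\sup_{|\eta|\le 1/h(t)}\Re\Phi(\eta)=\varphi(1/h(t))=1/t$ is still exactly what we need, so the scaling identity $\Re\Phi(\xi/h(t))\le 2(1+|\xi|^2)/t$ from the subadditivity of $\eta\mapsto\sqrt{\Re\Phi(\eta)}$ is unchanged. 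Consequently Steps 2 and 3 — the bound $\int|y|^n\lambda_t(dy)\le 2c_0$ and the bound $t\,\nu(B(0,h(t))^c)\le 2c_0$ — are literally the same computations, because nothing there used that $t$ is large.

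Next I would handle Step 1 in the small-time regime: by \eqref{den1} and \cite[Proposition 28.1]{SA} (the Hartman–Wintner criterion), since $\liminf_{|\xi|\to\infty}\Re\Phi(\xi)/\log(1+|\xi|)=\infty$ we have, for \emph{every} fixed $s>0$, that $\int e^{-s\Re\Phi(\xi)}\,d\xi<\infty$, hence $\exp(-s\Phi(\xi))$ is an integrable characteristic function and the corresponding infinitely divisible law has a continuous density. Applying this to the measure $\tilP_t^r$ and using the estimate $\big|\widehat{\tilP}^r_t(\xi)\big|\le \exp(-t\Re\Phi(\xi))\exp(2t\nu(B(0,r)^c))$ from \eqref{plargetime1}, I get a constant $t_3>0$ such that for all $0<t\le t_3$ and all $r>0$, the measure $\tilP_t^r$ admits a density $\tilp^r_t\in C_b(\R^d)$ — in fact here there is no lower restriction on $t$ at all, only the requirement $t\le t_3$ (or indeed any $t\le t_0$) so that \eqref{samll3} is available; the role that \eqref{coup1} played for large $t$ is now played by \eqref{den1} for small $t$. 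Then I would run Step 3: from \eqref{samll3}, for the given $m$ there is $t_4\in(0,t_3]$ (depending on $m,\Phi,t_0$ and the constant $c_1$ we want) such that $\int e^{-t\Re\Phi(\xi)}|\xi|^m\,d\xi\le c_1\,h(t)^{-(m+d)}$ for all $0<t\le t_4$, and combining this with $t\,\nu(B(0,h(t))^c)\le 2c_0$ gives, exactly as in \eqref{largep2}, $\int e^{-\Re G_t(\xi)}|\xi|^m\,d\xi\le c_1 e^{4c_0}<\infty$ for $0<t\le t_4$.

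Finally, Step 4 is unchanged: conditions \eqref{eq:Ass1} and \eqref{eq:Ass2} of Proposition \ref{lem:DESJ} hold for $\pi_t=g_t(y)\,dy$ (the first from the integrability just proved, the second from $\int|y|^{2\vee n}\lambda_t(dy)\le 2c_0$), so $g_t\in C_b^m(\R^d)$ and $|\partial^\beta_y g_t(y)|\le C(m,n,|\beta|,\Phi,t_0)(1+|y|)^{-n}$ for $|\beta|\le m-n$; undoing the scaling $g_t(y)=h(t)^d\tilp^{h(t)}_t(h(t)y)$, equivalently $\partial^\beta_y g_t(y)=h(t)^{d+|\beta|}\partial^\beta_y\tilp^{h(t)}_t(h(t)y)$, and recalling $h(t)=1/\varphi^{-1}(1/t)$, yields the claimed bound with $t_1:=t_4$. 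I do not expect any serious obstacle here; the only point that deserves care is making sure that the Hartman–Wintner integrability $\int e^{-s\Re\Phi}\,d\xi<\infty$ genuinely holds for all $s>0$ under \eqref{den1} (this is where the earlier large-time proof only needed it for $s$ bounded below by $t_0/d$, via \eqref{coup1}), and that all the constants $c_0,c_1,t_3,t_4$ are chosen independently of $t$ within the relevant range $(0,t_1]$.
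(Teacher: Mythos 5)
Your proposal is correct and follows exactly the route the paper takes: the published proof likewise just runs the argument of Proposition \ref{largetime} again, noting that \eqref{den1} (rather than \eqref{coup1}) gives absolute continuity of $\tilP_t^r$ for all $t>0$, that \eqref{largep1} is unaffected by the size of $t$, and that \eqref{samll3} yields \eqref{largep2} for small $t$, after which Proposition \ref{lem:DESJ} finishes the job. Your added care about the Hartman--Wintner integrability holding for every $s>0$ under \eqref{den1} is precisely the point the paper flags.
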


\begin{pf}
The proof is similar to that of Proposition~\ref{largetime}, and we
only sketch some key differences. We continue to use the notations of
the proof of Proposition~\ref{largetime}. According to \eqref
{plargetime1} and \eqref{den1}, for all $r>0$ and $t>0$, the measure
$\tilP_t^r$ is absolutely continuous with respect to Lebesgue measure.
Since $t>0$ may be arbitrarily small, we need \eqref{den1} rather than
\eqref{coup1}.
Denote by $p^{r}_t$ its density. Following the argument of Proposition
\ref{largetime}, we find that \eqref{largep1} is still valid, and
according to \eqref{samll3}, there exists some $t_2>0$ such that
\eqref{largep2} holds for all $0<t\leq t_2$. The required assertion
follows now from Proposition~\ref{lem:DESJ}.
\end{pf}

\section{Proofs of the main theorems and further examples}\label{section3}

We will now give the proofs for Theorem~\ref{th1} and~\ref{th2}. For
this, we need to estimate the coupling time of a general L\'evy
process. We will use the functions $\varphi$ and $h$ defined in~\eqref{eq:rho}.

\begin{theorem} \label{coup}
Assume that \eqref{coup1} holds, and there are $c,t_1>0$ such that for
any $t\geq t_1$,
\begin{equation}\label{coup2}
\int\exp[-(t\Re\Phi(\xi))] |\xi|^{d+2} \,\mathrm{d}\xi
\leq c(\varphi^{-1}(1/t))^{2d+2}.
\end{equation}
Then, the L\'evy process $X_t$ has the coupling property, and there
exist $t_2$, $C>0$ such that for any $x$, $y\in\R^d$ and $t\geq t_2$,
\begin{equation}\label{coup3}
\|P_t(x,\cdot)-P_t(y,\cdot)\|_{\var}
\leq C|x-y|\varphi^{-1}(1/t).
\end{equation}
%
\end{theorem}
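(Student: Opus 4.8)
The plan is to build a successful coupling of $X_t$ by coupling the "large-jump-truncated" part of the process, which has a nice density thanks to Proposition \ref{largetime}, and letting the remaining (compound Poisson) part coincide in both marginals. First I would decompose, in law, the L\'evy process into two independent pieces: the process $\tilP^{h(t)}_t$ whose L\'evy measure is $\nu$ restricted to $\{|y|\le h(t)\}$, together with the compound Poisson process $Q_t$ whose L\'evy measure is $\nu$ restricted to $\{|y|>h(t)\}$ (plus the appropriate drift compensation, which is irrelevant for coupling since it is the same for both marginals). This is the decomposition $P_t = \tilP^{h(t)}_t * \mu_t$ already used implicitly in Section \ref{section2}. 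The key point is that $h(t)$ depends on $t$, so for each fixed time horizon $t$ we use a different truncation level; this is the standard trick that makes the bound scale correctly.

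Next I would verify that hypothesis \eqref{coup2} implies hypothesis \eqref{large3} of Proposition \ref{largetime} with $m = d+2$ (indeed \eqref{coup2} is exactly \eqref{large3} for that value of $m$, since $\varphi^{-1}(1/t)^{2d+2} = (\varphi^{-1}(1/t))^{m+d}$). Hence for $t$ large, $\tilP^{h(t)}_t$ has a density $\tilp^{h(t)}_t \in C_b^{d+2}(\R^d)$, and taking $n = d+2$, $\beta = 0$ in Proposition \ref{largetime} gives the bound $|\tilp^{h(t)}_t(y)| \le C(\varphi^{-1}(t^{-1}))^d (1+\varphi^{-1}(t^{-1})|y|)^{-(d+2)}$, together with a corresponding bound on $|\nabla \tilp^{h(t)}_t(y)|$ by taking $n = d+1$, $|\beta| = 1$. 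These two estimates are precisely what is needed to control the total variation distance between two translates of $\tilp^{h(t)}_t$: by the standard estimate $\|\tilp^{h(t)}_t(\cdot - a) - \tilp^{h(t)}_t(\cdot)\|_{L^1} \le |a| \int_0^1 \int |\nabla \tilp^{h(t)}_t(y - sa)|\,dy\,ds$, and since the gradient bound integrates (after the substitution $z = \varphi^{-1}(t^{-1})y$) to something of order $\varphi^{-1}(t^{-1})$ uniformly in the shift, one obtains
$$
    \|\tilp^{h(t)}_t(\cdot - a) - \tilp^{h(t)}_t(\cdot)\|_{L^1}
    \le C\,|a|\,\varphi^{-1}(1/t).
$$

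Then I would assemble the coupling: start the two marginals from $x$ and $y$, run the compound-Poisson part identically in both (so it contributes nothing to the variation distance), and use the maximal coupling of the two laws $\tilP^{h(t)}_t(x - \cdot)$ and $\tilP^{h(t)}_t(y - \cdot)$, which succeeds with probability $1 - \frac12\|\tilP^{h(t)}_t(x-\cdot) - \tilP^{h(t)}_t(y-\cdot)\|_{\var}$. Since $P_t(x,\cdot)$ is the convolution $\tilP^{h(t)}_t(x - \cdot) * \mu_t$ and total variation does not increase under convolution, we get
$$
    \|P_t(x,\cdot) - P_t(y,\cdot)\|_{\var}
    \le \|\tilP^{h(t)}_t(x - \cdot) - \tilP^{h(t)}_t(y - \cdot)\|_{\var}
    = \|\tilp^{h(t)}_t(\cdot - (x-y)) - \tilp^{h(t)}_t(\cdot)\|_{L^1}
    \le C\,|x-y|\,\varphi^{-1}(1/t),
$$
which is \eqref{coup3}; combined with the general criterion (\cite{Li}, \cite[Theorem 4.1]{RWJ}) that $\|P_t(x,\cdot)-P_t(y,\cdot)\|_{\var}\to 0$ is equivalent to the coupling property — and noting $\varphi^{-1}(1/t)\to 0$ as $t\to\infty$ under \eqref{coup1} — this also yields successful coupling. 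I expect the main obstacle to be the bookkeeping in the $L^1$-gradient estimate: one must check that the bound from Proposition \ref{largetime} with the chosen exponents is integrable in $y$ and that, after rescaling by $\varphi^{-1}(t^{-1})$, the integral of $|\nabla \tilp^{h(t)}_t(y-sa)|$ over $y\in\R^d$ is bounded uniformly in both $s\in[0,1]$ and the shift $a$ — the polynomial decay of order $d+1$ is exactly what makes this work, and the choice $m=d+2$ in \eqref{coup2} is dictated by this requirement.
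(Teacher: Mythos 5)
Your proposal is correct and follows essentially the same route as the paper: the same decomposition of $\Phi$ into the part with jumps truncated at level $h(t)$ plus the large-jump remainder, the same reduction $\|P_t(x,\cdot)-P_t(y,\cdot)\|_{\var}\le\|Q_t(x,\cdot)-Q_t(y,\cdot)\|_{\var}$ via convolution, and the same application of Proposition \ref{largetime} with $m=d+2$ to get the gradient bound \eqref{pcoup1}. The only (harmless) difference is that you pass from the pointwise gradient bound to the $L^1$ translation estimate via $\|q_t(\cdot-a)-q_t\|_{L^1}\le |a|\,\|\nabla q_t\|_{L^1}$ and translation invariance, whereas the paper splits into the cases $|x-y|>h(t)$ and $|x-y|\le h(t)$ and decomposes the integration domain; both yield the bound $C|x-y|\,\varphi^{-1}(1/t)$.
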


\begin{pf}
Set
\[
\tilPhi_r(\xi) = \int_{|y|\leq r} (1-\mathrm{e}^{\mathrm{i}\scalp{\xi}{y}}+\mathrm{i}\scalp
{\xi}{y})\nu(\mathrm{d}y)
\]
and
\[
\Psi_r(\xi)
:= \Phi(\xi)-\tilPhi_r(\xi)
=\int_{|y|>r}(1-\mathrm{e}^{\mathrm{i}\scalp{\xi}{y}})\nu(\mathrm{d}y)-\mathrm{i}\xi\cdot\int
_{1<|y|\leq r}y\nu(\mathrm{d}y).
\]
Let $Y_t$ and $Z_t$ be two independent L\'evy processes whose symbols
are $\tilPhi_{r}(\xi)$ and $\Psi_{r}(\xi)$, respectively.
Denote by $Q_{t}$ and $Q_{t}(x,\cdot)$ the semigroup and the
transition function of~$Y_t$. Similarly, $R_{t}$ and
$R_{t}(x,\cdot)$ stand for the semigroup and the transition function of
$Z_t$. Then,
\begin{eqnarray}\label{pcoup0}
\|P_t(x,\cdot)-P_t(y,\cdot)\|_{\var}
&=& \sup_{\|f\|_\infty\leq1}|P_tf(x)-P_tf(y)|\nonumber\\
&=& \sup_{\|f\|_\infty\leq1} |Q_t R_t f(x)-Q_t R_tf(y)|\nonumber\\[-8pt]\\[-8pt]
&\leq&\sup_{\|g\|_\infty\leq1} |Q_tg(x)-Q_tg(y)|\nonumber\\
&=& \|Q_t(x,\cdot)-Q_t(y,\cdot)\|_{\var}.\nonumber
\end{eqnarray}
Now, we take $r=h(t)$. Then, according to \eqref{coup2} and Proposition
\ref{largetime}, there exists $t_3>0$ such that for any $t\geq t_3$, the
kernel $Q_t$ has a density $q_t\in C^{d+2}_b(\Rd)$, and for all
$y\in\Rd$,
\begin{equation}\label{pcoup1}
|\nabla q_t(y)|
\leq c(d,\Phi)h(t)^{-(d+1)}\bigl(1+h(t)^{-1}|y|\bigr)^{-(d+1)}.
\end{equation}
Thus, for any $t\geq t_3$,
\begin{eqnarray}\label{pcoup2}
\|Q_t(x,\cdot)-Q_t(y,\cdot)\|_{\var}
&=&\sup_{\|f\|_\infty\leq1}\biggl|\int f(z)Q_t(x,\mathrm{d}z)-\int f(z)Q_t(y,\mathrm{d}z)\biggr|\nonumber\\
&=&\sup_{\|f\|_\infty\leq1} \biggl|\int f(z) q_t(z-x)\,\mathrm{d}z-\int
f(z)q_t(z-y)\,\mathrm{d}z\biggr|\\
&=&\int|q_t(z-x)- q_t(z-y)|\,\mathrm{d}z.\nonumber
\end{eqnarray}

Let $t\geq t_3$. Assume that $|x-y|>h(t)$. Then,
\[
\int|q_t(z-x)- q_t(z-y)|\,\mathrm{d}z
\leq2 \leq\frac{2|x-y|}{h(t)}.
\]

If $|x-y|\leq h(t)$, then, by \eqref{pcoup1},
\begin{eqnarray*}
&&\int|q_t(z-x) - q_t(z-y)|\,\mathrm{d}z\\
&&\quad=\int\limits_{|z-x|>2h(t)}|q_t(z-x)-q_t(z-y)|\,\mathrm{d}z
+ \int\limits_{|z-x|\leq2h(t)}|q_t(z-x)-q_t(z-y)|\,\mathrm{d}z\\
&&\quad\leq c(d,\Phi) \frac{|x-y|}{h(t)^{d+1}}
\biggl[\int\limits_{|z-x|>2h(t)}\Bigl[ h(t)^{d+1} \sup_{w\in B(z-x,
|y-x|)}|\nabla q _t(w)| \Bigr] \,\mathrm{d}z + \int\limits_{|z-x|\leq2h(t)}\mathrm{d}z \biggr]\\
&&\quad\leq c(d,\Phi)\frac{|x-y|}{h(t)^{d+1}}
\biggl[\int\limits_{|z-x|>2h(t)}\biggl[1+\frac{|z-x|}{2h(t)}\biggr]^{-d-1} \,\mathrm{d}z + c_d
(2h(t))^d \biggr]\\
&&\quad=c(d,\Phi) \frac{|x-y|}{h(t)^{d+1}} \int\biggl[1+\frac
{|z-x|}{2h(t)}\biggr]^{-d-1} \,\mathrm{d}z + 2^dc_dc(d,\Phi)\frac{|x-y|}{h(t)}\\
&&\quad\leq\frac{C|x-y|}{h(t)}.
\end{eqnarray*}
Therefore, there exists $C>0$ such that for all $x, y\in\R^d$ and
$t\geq
t_3$,
\begin{equation}\label{pcoup3}
\int|q_t(z-x)-q_t(z-y)|\,\mathrm{d}z
\leq\frac{C|x-y|}{h(t)}.
\end{equation}
The assertion follows now from \eqref{pcoup0}, \eqref{pcoup2} and
\eqref{pcoup3}.
\end{pf}

Next, we turn to the proof of Theorem~\ref{th1}.
\begin{pf*}{Proof of Theorem~\ref{th1}}
Under the conditions of Theorem~\ref{th1}, we can suppose that for any
\mbox{$\xi\in\R^d$},
\[
\Re\Phi(\xi)\geq F(|\xi|),
\]
where $F(r)$ is a strictly increasing and differentiable function on
$(0,\infty)$ such that
\[
F(r)=
\cases{
c_1f(r) &\quad if  $r\in(0,c_2)$;\cr
c_3\log(c_4+c_5 r) &\quad if $r\in[c_2,\infty)$
}
\]
for some constants $c_i>0$, $i=1,2,3,4,5$. Thus,
\begin{eqnarray*}
\int\exp[- t\Re\Phi(\xi)] |\xi|^{d+2} \,\mathrm{d}\xi
&\leq&\int\exp[-tF(|\xi|)] |\xi|^{d+2}\,\mathrm{d}\xi\\
&=& c_d \int_0^\infty \mathrm{e}^{-tr}[F^{-1}(r)]^{2d+1}\,\mathrm{d}F^{-1}(r)\\
&=& \frac{c_d}{2(d+1)}\int_0^\infty \mathrm{e}^{-tr}\,\mathrm{d}[F^{-1}(r)]^{2(d+1)}.
\end{eqnarray*}
Since $\liminf_{r\to0}f(r)|\log r|<\infty$ and $\limsup
_{s\to0
}f^{-1}(2 s)/f^{-1}(s)<\infty$, we have
\[
\liminf_{r\to0}F(r)|\log r|<\infty
\]
and
\[
\limsup_{s\to0 }F^{-1}(2 s)/F^{-1}(s)<\infty.
\]
Note that the function $F$ also satisfies
\[
\lim_{s\to\infty}F(s)/\log s=c_3.
\]
Then, following the proof of~\cite{BRW}, Theorem 2.1, we obtain that
for $t\to\infty$,
\begin{eqnarray*}
\int_0^\infty \mathrm{e}^{-tr}\,\mathrm{d}[F^{-1}(r)]^{2(d+1)}
&\asymp&[F^{-1}(1/t)]^{2(d+1)}\\
&=& [f^{-1}(1/t)]^{2(d+1)}\\
&\asymp&[\varphi^{-1}(1/t)]^{2(d+1)}.
\end{eqnarray*}
In the last step we used, in particular, the upper bound of the
two-sided comparison $\Re\Phi(\xi)\asymp f(|\xi|)$ as $|\xi|\to
0$. The
desired assertion follows from Theorem~\ref{coup}.
\end{pf*}

The following result is the short-time analogue of Theorem~\ref{coup}
which gives, additionally, gradient estimates for general L\'evy
processes.
\begin{theorem}\label{gradient}
Assume that \eqref{den1} holds and let $\phi$ and $h$ be as in \eqref
{eq:rho}. If there is a constant $c_0>0$ such that
\begin{equation}\label{gradi1}
\int\exp(-t\Re\Phi(\xi)) |\xi|^{d+2}\,\mathrm{d}\xi
\leq c_0(\varphi^{-1}(1/t))^{2d+2}\qquad
\mbox{for all } t\ll1\ [t\gg1],
\end{equation}
then there exists a constant $c>0$ such that for all $u\in B_b(\Rd)$
\begin{equation}\label{gradi2}
\|\nabla P_tu\|_\infty\leq c\|u\|_\infty\varphi^{-1}(1/t)\qquad
\mbox{for all } t\ll1\ [t\gg1].
\end{equation}
\end{theorem}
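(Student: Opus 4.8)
The plan is to imitate the structure of the proof of Theorem~\ref{coup}, using the splitting of the L\'evy process into a ``large-jump'' part $Z_t$ (symbol $\Psi_r$) and a ``small-jump'' part $Y_t$ (symbol $\tilPhi_r$), and then transferring the pointwise gradient bound for the density of $Y_t$ from Proposition~\ref{smalltime} (in the small-$t$ regime) or Proposition~\ref{largetime} (in the large-$t$ regime) to a bound on $\|\nabla P_t u\|_\infty$. First I would fix the regime — say $t\ll 1$; the case $t\gg1$ is identical word for word, invoking Proposition~\ref{largetime} in place of Proposition~\ref{smalltime} and using \eqref{coup1} (which follows from the $t\gg1$ part of \eqref{gradi1}) instead of \eqref{den1}. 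I would set $r=h(t)$ and write $P_t u = Q_t R_t u$, where $Q_t$, $R_t$ are the (sub-)Markov semigroups of $Y_t$, $Z_t$ respectively; since $R_t$ is a contraction on $B_b(\Rd)$, it suffices to bound $\|\nabla Q_t g\|_\infty$ for $\|g\|_\infty \le 1$.

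Next, by \eqref{gradi1} and Proposition~\ref{smalltime} (applied with $m=d+2$, so that $|\beta|\le 1$ is allowed with $n=d+1$), there is $t_1>0$ such that for $0<t\le t_1$ the kernel $Q_t$ has a density $q_t\in C_b^{d+2}(\Rd)$ with
\begin{equation*}
    |\nabla q_t(y)| \le c(d,\Phi)\,h(t)^{-(d+1)}\bigl(1+h(t)^{-1}|y|\bigr)^{-(d+1)},
    \qquad y\in\Rd.
\end{equation*}
Then for any $g$ with $\|g\|_\infty\le 1$ and any $x\in\Rd$, writing $Q_t g(x)=\int g(z)\,q_t(z-x)\,dz$ and differentiating under the integral sign,
\begin{equation*}
    |\nabla Q_t g(x)| \le \int |\nabla q_t(z-x)|\,dz
    \le c(d,\Phi)\,h(t)^{-(d+1)}\int \bigl(1+h(t)^{-1}|w|\bigr)^{-(d+1)}\,dw.
\end{equation*}
The substitution $w=h(t)v$ turns the last integral into $h(t)^d\int(1+|v|)^{-(d+1)}\,dv=:c_d'\,h(t)^d<\infty$, so $|\nabla Q_t g(x)|\le C\,h(t)^{-1}=C\,\varphi^{-1}(1/t)$, uniformly in $x$ and in $\|g\|_\infty\le 1$. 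Since $|\nabla P_t u(x)|\le \|u\|_\infty\,\sup_{\|g\|_\infty\le 1}|\nabla Q_t g(x)|$ by the contraction property of $R_t$ and linearity, this gives \eqref{gradi2} for all $0<t\le t_1$.

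The one genuinely delicate point — and the place I would be most careful — is the interchange of $\nabla$ with the integral defining $Q_t g$ and, more basically, the fact that $|\nabla Q_t g(x)|$ in the sense of the $\limsup$-definition is actually controlled by $\int|\nabla q_t(z-x)|\,dz$ even though $g$ is merely bounded measurable: here one uses that $q_t\in C_b^1$ with $\nabla q_t\in L^1(\Rd)$ (which is exactly what the displayed pointwise bound, integrated, provides), so that $Q_t g$ is Lipschitz with the stated constant by the mean value theorem applied to $y\mapsto q_t(z-y)$ followed by dominated convergence — this is the same mechanism already used silently in passing from \eqref{pcoup1} to \eqref{pcoup3} in the proof of Theorem~\ref{coup}. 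Beyond that, the proof is bookkeeping: one only needs $|\beta|=1\le m-n$ with $m=d+2$, $n=d+1$, which is why \eqref{gradi1} is stated with the exponent $d+2$ on $|\xi|$ and $2d+2$ on $\varphi^{-1}(1/t)$, matching hypothesis \eqref{samll3} (resp.\ \eqref{large3}) of Proposition~\ref{smalltime} (resp.\ Proposition~\ref{largetime}).
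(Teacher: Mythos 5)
Your proof is correct, and for the small-time regime it is essentially identical to the paper's: split off the large jumps at radius $r=h(t)$, write $P_t=Q_tR_t$, get the pointwise bound on $\nabla q_t$ from Proposition~\ref{smalltime} with $m=d+2$, $n=d+1$, $|\beta|=1$, integrate it (the scaling $w=h(t)v$ gives exactly $\int|\nabla q_t|\le c\,\varphi^{-1}(1/t)$), and use that $R_t$ is a contraction. Your care about why $\int|\nabla q_t(z-\cdot)|\,dz$ controls the $\limsup$-gradient of $Q_tg$ for merely bounded measurable $g$ is exactly the point the paper passes over in the display \eqref{stableprocesses}. The only place you diverge is the large-time regime: you rerun the same argument with Proposition~\ref{largetime}, whereas the paper instead quotes the coupling-time estimate \eqref{coup3} of Theorem~\ref{coup} and bounds $|\nabla P_tu(x)|$ by $\limsup_{y\to x}\|P_t(x,\cdot)-P_t(y,\cdot)\|_{\var}/|y-x|$. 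Both routes are valid and rest on the same density bound \eqref{pcoup1}; your version is marginally more self-contained (it does not need Theorem~\ref{coup} as an intermediary), while the paper's reuses work already done. One small inaccuracy: \eqref{coup1} does not follow from the $t\gg1$ part of \eqref{gradi1}; it follows from the standing hypothesis \eqref{den1}, which is stronger than \eqref{coup1} for every $t_0$ — but this is immaterial to the argument.
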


\begin{pf}
We will treat the short- and large-time cases separately.

Recall the notations used in the proof of Theorem~\ref{coup}: $Q_t$ and
$R_t$ are the semigroups corresponding to $\Phi_{r}(\xi)$ and
$\Psi_{r}(\xi)$, respectively. According to \eqref{gradi1} and
Proposition~\ref{smalltime}, for small enough $t\ll1$, and $r=h(t)$,
the measure $Q_t$ has a density $q_t\in C^{d+2}_b(\Rd)$ such that for
any $y\in\Rd$,
\begin{equation}\label{pgradi1}
|\nabla q_t(y)|
\leq c(d,\Phi) (\varphi^{-1}(t^{-1}))^{d+1} \bigl(1+\varphi
^{-1}(t^{-1})|y|\bigr)^{-(d+1)}.
\end{equation}
Then, for all $u\in B_b(\R^d)$,
\begin{eqnarray}\label{stableprocesses}
\sup_{\|u\|_\infty\leq1}\|\nabla Q_tu\|_\infty
&=&\sup_{\|u\|_\infty\leq1}\sup_{x\in\R^d}|\nabla Q_tu(x)|\nonumber\\
&=&\sup_{\|u\|_\infty\leq1}\sup_{x\in\R^d}\biggl|\nabla\int
q_t(z-x)\cdot u(z)\,\mathrm{d}z\biggr|\nonumber\\
&=&\sup_{x\in\R^d}\sup_{\|u\|_\infty\leq1}\biggl| \int\nabla
q_t(z-x)\cdot u(z)\,\mathrm{d}z\biggr|\nonumber\\[-8pt]\\[-8pt]
&=&\sup_{x\in\R^d} \int|\nabla q_t(z-x)|\,\mathrm{d}z\nonumber\\
&=&\int|\nabla q_t(z)|\,\mathrm{d}z\nonumber\\
&\leq& c\varphi^{-1}(t^{-1}),\nonumber
\end{eqnarray}
where we used \eqref{pgradi1} and dominated convergence. This
calculation shows
\[
\|\nabla Q_t u\|_\infty
\leq c\varphi^{-1}(t^{-1})\|u\|_\infty.
\]
Therefore,
\begin{equation}\label{proofex1}
\|\nabla P_t u\|_\infty
=\|\nabla Q_t(R_tu)\|_\infty
\leq c\varphi^{-1}(t^{-1})\|R_tu\|_\infty
\leq c\varphi^{-1}(t^{-1})\|u\|_\infty,
\end{equation}
which finishes the proof for small $t\ll1$.

If $t\gg1$ is sufficiently large, we can apply \eqref{coup3}, to find
for any $u\in B_b(\R^d)$ with $\|u\|_\infty= 1$,
\begin{eqnarray*}
|\nabla P_tu(x)|
&\leq&\limsup_{y\to x}\frac{|P_tu(x)-P_tu(y)|}{|y-x|}\\
&\leq&\limsup_{y\to x}\frac{\sup_{\|w\|_\infty\leq
1}|P_tw(x)-P_tw(y)|}{|y-x|}\\
&\leq&\limsup_{y\to x}\frac{\|P_t(x,\cdot)-P_t(y,\cdot)\|_{\var
}}{|y-x|}\\
&\leq& C\varphi^{-1}(t^{-1}).
\end{eqnarray*}
This finishes the proof for large $t\gg1$.
\end{pf}

\begin{remark}\label{stable}
Let $X_t$ be a rotationally invariant $\alpha$-stable L\'evy process on
$\R^d$, and $p_t$ be its density function. By the scaling property, for
any $t>0$ and $x\in\R^d$, $p_t(x)=t^{-d/\alpha}p_1(t^{-1/\alpha
}x)$. On
the other hand, it is well known that, see, for example,~\cite{Gl},
\[
|\nabla p_1(x)|\leq\frac{c}{1+|x|^{d+\alpha}}.
\]
Denote by $P_t$ the semigroup of $X_t$. Then, according to the proof of
\eqref{stableprocesses}, we have
\[
\sup_{\|u\|_\infty\leq1}\|\nabla{P}_tu\|_\infty=t^{-1/\alpha}\int
|\nabla{p}_1(z)|\,\mathrm{d}z.
\]
This implies that Theorem~\ref{th2} is optimal.
\end{remark}

We can now use Theorem~\ref{gradient} to prove Theorem~\ref{th2}.

\begin{pf*}{Proof of Theorem~\ref{th2}}
The second assertion easily follows from Theorem~\ref{th1} and the
proof of Theorem~\ref{gradient}. It is therefore enough to consider the
first conclusion. Under the conditions assumed in Theorem~\ref{th2}, we
know that for any $\xi\in\R^d$,
\[
\Re\Phi(\xi)\geq F(|\xi|),
\]
where $F(r)$ is an increasing function on $(0,\infty)$ such that
\[
F(r)=
\cases{ 0 &\quad if  $r\in(0,c_1]$;\cr
c_2f(r) &\quad if  $r\in(c_1,\infty)$
}
\]
for some constants $c_i>0$, $i=1,2$, and $f$ is strictly increasing and
differentiable on $(c_1,\infty)$. Therefore,
\[
\int\exp[-t\Re\Phi(\xi)] |\xi|^{d+2} \,\mathrm{d}\xi
\leq\int\exp[-tF(|\xi|)] |\xi|^{d+2} \,\mathrm{d}\xi.
\]

Since $\limsup_{s\to\infty}f^{-1}(2s)/f^{-1}(s)<\infty$, we can choose
$c>2$ and $s_0>0$ such that we have $f^{-1}(2s)\leq cf^{-1}(s)$ for all
$s\geq s_0$. For any $k\geq1$ the monotonicity of $f^{-1}$ gives
\[
f^{-1}(2^ks)
\leq c^kf^{-1}(s)
= 2^{k\alpha} f^{-1}(s),
\]
where we use $\alpha=\log_2c$. Then, for sufficiently small $t\ll1$,
\begin{eqnarray*}
&&\int\exp[- tF(|\xi|)] |\xi|^{d+2} \,\mathrm{d}\xi\\
&&\quad=\int_{|\xi|<c_1} |\xi|^{d+2} \,\mathrm{d}\xi
+ c_d \int_{c_1}^\infty \mathrm{e}^{-c_2 t f(r)} r^{2d+1} \,\mathrm{d}r\\
&&\quad\leq C_1 + c_d\int_0^\infty \mathrm{e}^{-c_2s} \mathrm{d}_s[f^{-1}(s/t)]^{2d+2}\\
&&\quad\leq C_1 + c_d\Biggl\{ \int_0^1 + \sum_{n=1}^\infty\int_{2^{n-1}}^{2^n}
\Biggr\} \mathrm{e}^{-c_2s} \,\mathrm{d}_s[f^{-1}(s/t)]^{2d+2}\\
&&\quad\leq
C_1 + c_d [f^{-1}(1/t)]^{2d+2} + c_d \sum_{n=1}^\infty\exp[-c_2
2^{n-1}] [ f^{-1}(2^n / t)]^{2d+2}\\
&&\quad\leq C_1 + c_d\Biggl( 1 + \sum_{n=1}^\infty\exp[ -c_2 2^{n-1}] 2^{n\alpha
(2d+2)}\Biggr) [f^{-1}(1/t)]^{2d+2}\\
&&\quad\leq C_1 + C_2 [f^{-1}(1/t)]^{2d+2}.
\end{eqnarray*}
Because of \eqref{den1} and $\Re\Phi(\xi)\asymp f(|\xi|)$ as
$|\xi|\to\infty$, we find $f^{-1}(1/t)\to\infty$ as $t\to0$. Thus,
\begin{eqnarray*}
\int\exp[-t\Re\Phi(\xi)] |\xi|^{d+2}\,\mathrm{d}\xi
&\leq&\int\exp[-tF(|\xi|)] |\xi|^{d+2} \,\mathrm{d}\xi\\
&\leq& C_3 [f^{-1}(1/t)]^{2d+2}.
\end{eqnarray*}
In the last step we used, in particular, the upper bound of the
two-sided comparison $\Re\Phi(\xi)\asymp f(|\xi|)$ as $|\xi|\to
\infty$. Now the assertion follows from Theorem~\ref{gradient}.
\end{pf*}

\begin{remark}
If we assume in the statement of Theorem~\ref{th2} that
$f^{-1}(s)=s^\alpha\ell(s)$ for some $\alpha>0$ and some positive
function $\ell$ which is slowly varying at $\infty$ -- that is,
$\lim_{s\to\infty} \ell(\lambda s)/\ell(s) = 1$ for every $\lambda>0$,
then standard Abelian and Tauberian arguments (see, e.g.,
\cite{BSW}, Theorems 1.7.1 and 1.7.1$'$, or~\cite{Feller}, Chapter XIII.5, Theorems 1 and 3), we can obtain that
\[
\int\exp[- tF(|\xi|)] |\xi|^{d+2} \,\mathrm{d}\xi
\asymp
[f^{-1}(1/t)]^{2d+2}
\qquad\mbox{as } t\to0.
\]
\end{remark}

Let us finally turn to the examples from Section~\ref{section1}.
\begin{pf*}{Proof of Example~\ref{ex0}}
The symbol of the subordinate Brownian motion here satisfies
\[
\Re\Phi(\xi) \asymp|\xi|^{\alpha+\beta} \qquad\mbox{as } |\xi|\to0,
\]
and
\[
 \Re\Phi(\xi)\asymp|\xi|^{\alpha}\bigl(\log(1+|\xi
|)\bigr)^{\beta/2} \qquad\mbox{as } |\xi|\to\infty.
\]
For $r>0$, set $f(r)=r^\alpha(\log(1+r))^{\beta/2}$ and $g(r)=(r
(\log(1+r))^{-\beta/2})^{1/\alpha}$. Then, for $r\to\infty$, we have
\begin{eqnarray*}
f(g(r))
&=& r \bigl(\log(1+r)\bigr)^{-\beta/2} \bigl[\log\bigl(1+\bigl(r\bigl(\log(1+r)\bigr)^{-\beta
/2}\bigr)^{1/\alpha} \bigr)\bigr]^{\beta/2}\\
&\asymp& r (\log r)^{-\beta/2} \biggl[\frac{2\log r -\beta\log\log
r}{\alpha}\biggr]^{\beta/2}\\
&=& r \biggl[\frac{2\log r -\beta\log\log r}{\alpha\log r}\biggr]^{\beta/2}\\
&\asymp& r.
\end{eqnarray*}
This shows that $f^{-1}(r)\asymp g(r)$ for $r\to\infty$, and now
Theorems~\ref{th1} and~\ref{th2} apply.
\end{pf*}

\begin{pf*}{Proof of Example~\ref{ex1}}
Let $Y_t$ and $Z_t$ be L\'evy processes whose L\'evy measures are given
by
\[
\nu^Y(A):=\int_0^{r_0}\int_{\sfera} \I_A(s\theta) s^{-1-\alpha
}\,\mathrm{d}s\mu(\mathrm{d}\theta)
+\int_{r_0}^\infty\int_{\sfera}\I_A(s\theta)s^{-1-\beta}\,\mathrm{d}s\mu
(\mathrm{d}\theta)
\]
and
\[
\nu^Z(\mathrm{d}z):=\nu(\mathrm{d}z)-\nu^Y(\mathrm{d}z)\geq0,
\]
respectively. After some elementary calculations, we see that the
symbol $\Phi^Y$ of $Y_t$ satisfies $\Re\Phi^Y(\xi)\asymp|\xi
|^\alpha$
as $|\xi|\to\infty$ and $\Re\Phi^Y(\xi)\asymp|\xi|^{\beta
\wedge2}$ as
$|\xi|\to0$. Let $P^Y_t(x,\cdot)$ and $P^Y_t$ denote the transition
function and the semigroup of $Y_t$. According to Theorems~\ref{th1}
and~\ref{th2}, we can prove the claim first for $Y$ (if we replace in
these Theorems $\Phi$, $P_t$ and $P_t(x,\cdot)$ by the corresponding
objects $\Phi^Y$, $P_t^Y$ and $P_t^Y(x,\cdot)$). To come back to the
original process resp. semigroup, we can now use \eqref{pcoup0} and
\eqref{proofex1}.
\end{pf*}

Example~\ref{ex1} applies to a large number of interesting and
important L\'evy processes, whose L\'evy measures are of the following
polar coordinates form:
\[
\nu(A)
= \int_0^\infty\int_{\sfera}\I_A(s\theta) Q(\theta,s)\,\mathrm{d}s \mu
(\mathrm{d}\theta);
\]
$Q(\theta,s)$ is a nonnegative function on $\sfera\times(0,\infty)$.
For instance, Example~\ref{ex1} is applicable for the following
processes:
\begin{enumerate}
\item Stable L\'evy processes~\cite{Sz1}:
\[
Q(\theta,s)\asymp s^{-\alpha-1},
\]
where $\alpha\in(0,2)$.
\item Layered stable processes~\cite{HO}:
\[
Q(\theta,s)\asymp s^{-\alpha-1}\I_{(0,1]}(s)+s^{-\beta-1}\I
_{[1,\infty)}(s),
\]
where $\alpha\in(0,2)$ and $\beta\in(0,\infty)$.
\item Tempered stable processes~\cite{RO}:
\[
Q(\theta,s)\asymp s^{-\alpha-1}\mathrm{e}^{-cs},
\]
where $\alpha\in(0,2)$ and $c>0$.
\item Relativistic stable processes~\cite{CM,BMR}:
\[
Q(\theta,s)\asymp s^{-\alpha-1}(1+s)^{(d+\alpha-1)/2}\mathrm{e}^{-s},
\]
where $\alpha\in(0,2)$.
\item Lamperti stable processes~\cite{CA}:
\[
Q(\theta,s) = s^{-\alpha-1}\frac{\exp(sf(\theta))s^{1+\alpha
}}{(\mathrm{e}^s-1)^{1+\alpha}},
\]
where $\alpha\in(0,2)$ and $f\dvtx \sfera\to\R$ such that $\sup
_{\theta\in\sfera}f(\theta)<1+\alpha$.
\item Truncated stable processes~\cite{KS}:
\[
Q(\theta,s)\asymp s^{-\alpha-1}\I_{(0,1]}(s),
\]
where $\alpha\in(0,2)$.\vadjust{\goodbreak}
\end{enumerate}

Motivated by Example~\ref{ex1}, we can present a short proof of (one
part of) F.-Y. Wang's result on explicit gradient estimates for the
semigroup of a general L\'evy processes.
\begin{theorem}[(F.-Y. Wang~\cite{W2}, Theorem 1.1)]\label{wang}
Let $X_t$ be a L\'evy process on $\R^d$ with L\'evy measure~$\nu$.
Assume that there exists some $r\in(0,\infty]$ such that
\[\label{1proff2}
\nu(\mathrm{d}z)\geq|z|^{-d}f(|z|^{-2})\I_{\{|z|\leq r\}}\,\mathrm{d}z,
\]
where $f$ is Bernstein function such that
\[
\liminf_{r\to\infty} \frac{f(r)}{\log r}=\infty
\quad\mbox{and}\quad
\limsup_{s\to\infty} \frac{f^{-1}(2 s)}{f^{-1}(s)}\in(0,\infty).
\]
Then there exists a constant $c>0$ such that for any $t>0$,
\[\label{th21}
\|\nabla P_tu\|_\infty
\leq c\|u\|_\infty f^{-1}\biggl(\frac{1}{t\wedge1}\biggr),\qquad u\in{B}_b(\R^d).
\]
\end{theorem}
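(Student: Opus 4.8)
The plan is to deduce Theorem \ref{wang} from Theorem \ref{gradient} by reducing, as in the proof of Example \ref{ex1}, to the case where the L\'evy measure is exactly $\rho(dz)=|z|^{-d}f(|z|^{-2})\I_{\{|z|\le r\}}\,dz$. First I would split the process: writing $\mu(dz)=\nu(dz)-\rho(dz)\ge 0$, the L\'evy process $X_t$ decomposes in law into two independent L\'evy processes $Y_t$ and $Z_t$ with characteristic exponents $\Phi_\rho$ and $\Phi_\mu$, respectively. Since $P_t=Q_tR_t$ with $Q_t$, $R_t$ the corresponding semigroups, and since $\|R_t u\|_\infty\le\|u\|_\infty$, estimate \eqref{proofex1} shows it suffices to prove the gradient estimate for $Q_t$, i.e.\ for the process $Y_t$ with symbol $\Phi_\rho$.

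Second, I would identify the asymptotics of $\Re\Phi_\rho$. By \cite[Lemma 2.1]{JS} (as already invoked in the first Remark after Theorem \ref{th1}), one has $\Phi_\rho(\xi)\asymp g(|\xi|^2)$ as $|\xi|\to\infty$ for a suitable Bernstein-type function built from $f$; in the truncated case $r<\infty$ the truncation only affects the small-$|\xi|$ behaviour and leaves the behaviour near infinity comparable to $f(|\xi|^2)$, so that $\Re\Phi_\rho(\xi)\asymp \tilde f(|\xi|)$ as $|\xi|\to\infty$ with $\tilde f(s)=f(s^2)$, which is strictly increasing and differentiable near infinity. The hypotheses $\liminf_{s\to\infty}f(s)/\log s=\infty$ and $\limsup_{s\to\infty}f^{-1}(2s)/f^{-1}(s)<\infty$ translate into \eqref{den1} for $\Phi_\rho$ and into $\limsup_{s\to\infty}\tilde f^{-1}(2s)/\tilde f^{-1}(s)<\infty$. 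Then, exactly as in the first part of the proof of Theorem \ref{th2}, the doubling property of $\tilde f^{-1}$ gives the integrability bound \eqref{gradi1} in the regime $t\ll 1$, and Theorem \ref{gradient} yields
$$
    \|\nabla Q_t u\|_\infty\le c\,\|u\|_\infty\,\tilde f^{-1}(1/t)\asymp c'\,\|u\|_\infty\,f^{-1}(1/t)
    \qquad\text{for\ \ } t\ll 1 .
$$

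Third, I would handle large $t$. Because $f^{-1}(1/(t\wedge 1))=f^{-1}(1)$ is a constant for $t\ge 1$, the claimed bound for $t\gg 1$ is just $\|\nabla P_t u\|_\infty\le c\,\|u\|_\infty$, a uniform-in-$t$ gradient estimate. This follows from the small-time estimate together with the semigroup property: for $t\ge 1$, $\|\nabla P_t u\|_\infty=\|\nabla P_{1/2}(P_{t-1/2}u)\|_\infty\le c\,f^{-1}(2)\,\|P_{t-1/2}u\|_\infty\le c\,f^{-1}(2)\,\|u\|_\infty$, using that $P_{t-1/2}$ is a contraction on $B_b(\R^d)$. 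Combining the two regimes and absorbing constants gives the stated bound for all $t>0$.

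The main obstacle I anticipate is the second step: justifying $\Re\Phi_\rho(\xi)\asymp f(|\xi|^2)$ as $|\xi|\to\infty$ uniformly, and checking that the truncation $\I_{\{|z|\le r\}}$ (when $r<\infty$) does not spoil this. This is the "tedious but routine" computation referred to in the Remark after Theorem \ref{th1}; it rests on the observation that the behaviour of $\Re\Phi_\rho$ at infinity is governed by the L\'evy measure near the origin, where the truncation is inactive, so one can compare $\Phi_\rho$ with the non-truncated symbol $\Phi_{|z|^{-d}f(|z|^{-2})\,dz}$ up to a bounded perturbation. Everything else is a direct application of Theorem \ref{gradient} and the decomposition argument already used for Example \ref{ex1}.
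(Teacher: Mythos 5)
Your proposal is correct and follows essentially the same route as the paper: decompose $X_t$ using the minorizing L\'evy measure, verify $\Re\Phi^Y(\xi)\asymp f(|\xi|^2)$ as $|\xi|\to\infty$, apply Theorem \ref{th2} (via Theorem \ref{gradient}) to $Y_t$, and transfer back through \eqref{pcoup0} and \eqref{proofex1}. The only addition is your explicit treatment of $t\ge 1$ via the semigroup contraction property, which the paper leaves implicit in its earlier remark that the gradient bound is decreasing in $t$.
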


\begin{pf}
According to the proofs of Example~\ref{ex1} and~\cite{W2}, Theorem~1.1, we see that $X_t$ can be decomposed into two independent
L\'evy processes $Y_t$ and $Z_t$, such that the symbol $\Phi^Y(\xi)$ of
$Y_t$ satisfies $\Phi^Y(\xi)\asymp f(|\xi|^{2})$ as $|\xi|\to
\infty$.
Now we can apply Theorem~\ref{th2} and the claim follows.
\end{pf}

\section*{Acknowledgements}
The authors would like to thank the referee and the
Associate Editor for their helpful
comments on earlier versions of the paper. Financial support through
DFG (Grant Schi 419/5-1) and DAAD (PPP Kroatien) (for Ren\'{e} L.
Schilling),
MNiSW, Poland (Grant N N201 397137) (for Pawe{\l} Sztonyk) and the
Alexander-von-Humboldt Foundation and the Natural Science
Foundation of Fujian (No. 2010J05002$)$ (for Jian Wang, the
corresponding author) is gratefully acknowledged.

%

%

\printhistory

\end{document}